\documentclass[12pt]{article}

\usepackage[inner=32mm,outer=31mm,tmargin=30mm,bmargin=40mm]{geometry}
\usepackage[ngerman,english]{babel}

\geometry{a4paper}
\usepackage{latexsym,amsfonts,amsmath,amssymb,epsfig,tabularx,amsthm,dsfont,mathrsfs}

\usepackage{graphicx}
\usepackage{enumerate}

\usepackage{booktabs}
\usepackage{titling}
\newcommand{\subtitle}[1]{%
  \posttitle{%
    \par\end{center}
    \begin{center}\large#1\end{center}
    \vskip0.5em}%
}

\usepackage{hyperref} 
\hypersetup{colorlinks=true,
        linkcolor=black,
        citecolor=black,
        urlcolor=blue}

\usepackage{pgfplots}

\usepackage{framed}
\usepackage{amscd}

\usepackage{tikz-cd}
\usepackage{tikz}




\theoremstyle{plain}

\newtheorem{theorem}{Theorem}[section]
\newtheorem{lemma}[theorem]{Lemma}

\newtheorem{proposition}[theorem]{Proposition}
\newtheorem{corollary}[theorem]{Corollary}

\newtheorem{consequence}{Consequence}

\theoremstyle{definition}
\newtheorem{remark}[theorem]{Remark}

\renewcommand{\P}{{\mathbb P}}
\newcommand{\expect}{\operatorname{\mathbb{E}}}

\DeclareMathOperator{\Uniform}{unif}
\DeclareMathOperator{\Normal}{\mathcal{N}}

\newcommand{\iid}{\stackrel{\textup{iid}}{\sim}}

\newcommand{\dint}{\,\mathup{d}}


\DeclareMathOperator{\linspan}{span}



\newcommand{\ind}{\mathds{1}}
\DeclareMathOperator*{\argmin}{argmin}

\DeclareMathOperator{\dist}{dist}

\DeclareMathOperator{\id}{id}

\DeclareMathOperator{\diag}{diag}



\newcommand{\ran}{\textup{ran}}
\newcommand{\deter}{\textup{det}}

\newcommand{\nonada}{\textup{nonada}}
\newcommand{\lin}{\textup{lin}}


\newcommand{\eps}{\varepsilon}
\newcommand{\euler}{e} 
\newcommand{\embed}{\hookrightarrow}

\renewcommand{\vec}{\boldsymbol}



\newcommand{\R}{{\mathbb R}}

\newcommand{\N}{{\mathbb N}}


\DeclareMathAlphabet{\mathup}{OT1}{\familydefault}{m}{n}

\newcommand{\wt}{\widetilde}
\newcommand{\widebar}[1]{\mbox{\kern1.5pt\hbox{\vbox{\hrule height 0.6pt \kern0.35ex
        \hbox{\kern-0.15em \ensuremath{#1 }\kern0.0em}}}}\kern-0.1pt}

\newlength{\fixboxwidth}
\setlength{\fixboxwidth}{\marginparwidth}
\addtolength{\fixboxwidth}{-7pt}

\usepackage{soul}

\usepackage{latexsym,amsfonts,amsmath,amssymb,mathrsfs}
\usepackage{url}
\usepackage{graphicx}
\usepackage{color}
\usepackage{euscript}
\usepackage{verbatim}
\usepackage{hyperref}


\begin{document}



\title{Randomized approximation of summable sequences -- adaptive and non-adaptive}

\author{Robert J. Kunsch\thanks{RWTH Aachen University,
at the Chair of Mathematics of Information Processing,
Pontdriesch~10,
52062 Aachen, Email: kunsch@mathc.rwth-aachen.de
}, 
Erich Novak\thanks{FSU Jena, 
Ernst-Abbe-Platz 2, 03641 Jena, Email: 
erich.novak@uni-jena.de}, 
Marcin Wnuk\thanks{Institut für Mathematik, 
Osnabrück University, Albrechtstraße 28a, 49076 Osnabrück, 
Email: marcin.wnuk@uni-osnabrueck.de, corresponding author}}

\date{\today}

\maketitle
\begin{abstract}
We prove lower bounds for the randomized approximation
of the embedding $\ell_1^m \embed \ell_\infty^m$
based on algorithms that use arbitrary linear 
(hence non-adaptive) information
provided by a (randomized) measurement 
matrix $N \in \R^{n \times m}$.
These lower bounds 
reflect the increasing difficulty of the 
problem for $m \to \infty$,
namely, a term 
$\sqrt{\log m}$ in the complexity $n$.
This result implies that non-compact operators 
between arbitrary Banach spaces
are not approximable using non-adaptive Monte Carlo methods. 
We also compare these lower bounds
for non-adaptive methods 
with upper bounds based on adaptive, randomized 
methods for recovery
for which the complexity $n$ only 
exhibits a $(\log\log m)$-dependence. 
In doing so we give an example of linear problems where 
the error for adaptive vs.\ non-adaptive Monte Carlo methods
shows a gap of order $n^{1/2} ( \log n)^{-1/2}$.
\end{abstract}

{\bf Keywords: } Monte Carlo, information-based complexity, lower bounds, compact operators,
adaption


\

\section{Introduction}
Consider
a linear problem given by
a linear operator
$
  S \colon\; F \to G
$
between Banach spaces $F$ and $G$. We are interested in the randomized approximation error, namely for a randomized algorithm $A_n$ and an input $f \in F$, the output $A_n(f)$ is a $G$-valued random variable and we define its error by
\begin{equation*}
    e(A_n,S,f) := \expect \|A_n(f) - S(f)\|_G \,.
\end{equation*}
A randomized algorithm can be seen as a family $A_n = (A_n^\omega)_{\omega \in \Omega} = (\phi^\omega \circ N^\omega)_{\omega \in \Omega}$
of mappings $F \to G$
on a complete probability space $(\Omega, \mathcal{A},\P)$
where each realization $A_n^\omega$ can be decomposed
into an information map $N^\omega \colon F \to \R^n$
that uses $n$ linear functionals,
and a reconstruction map $\phi^\omega \colon \R^n \to G$.
In the simplest setting of \emph{non-adaptive} information,
$N^\omega$ is a linear map yielding information
\begin{equation*}
    \vec{y} = (y_1,\ldots,y_n) = N^\omega(f)
    = \bigl(L_1^\omega(f), \ldots, L_n^\omega(f)\bigr)
\end{equation*}
with linear functionals $L_1^\omega,\ldots,L_n^\omega \in F^\prime$.
In a finite-dimensional problem setup with $F = \R^m$,
a non-adaptive information map $N = (N^\omega)_{\omega \in \Omega}$
can be identified with a random $(n \times m)$-matrix.
The more general case of \emph{adaptive} information
allows $L_2^\omega,\ldots,L_n^\omega$ to be chosen depending on previous
information, in detail, for $k=2,\ldots,n$ we may have
\begin{equation*}
    y_k := L_k^\omega(f;y_1,\ldots,y_{k-1})
    \qquad\text{where}\quad L_k^\omega(\,\cdot\,;y_1,\ldots,y_{k-1}) \in F^\prime.
\end{equation*}
We assume Borel measurability for the maps $\Omega \times \R^{k-1} \to F'$ selecting the $k$-th information functionals, 
thus guaranteeing the measurability of the information map
$N \colon \Omega \times F \to \R^n$
as a whole.
We also assume measurability of the reconstruction
$\phi\colon \Omega \times \R^n \to G$
which implies measurability of the algorithm $A_n \colon \Omega \times F \to G$.

%
We compare algorithms that use (at most) $n$ linear functionals 
and call $n$ the (information) cost of such an algorithm.
This leads to the following notion of a minimal Monte Carlo error for the problem $S$,
\begin{equation*}
  e^{\ran}(n,S) := \inf_{(A_n^\omega)_{\omega \in \Omega}} \sup_{\|f\|_F \leq 1} \expect \|A_n(f) - S(f)\|_G \,;
\end{equation*}
the minimal $n$ needed to guarantee an error at most $\eps$ is called complexity. 
We thus use the standard worst case setting of 
approximation theory and information-based complexity
for randomized algorithms.
In this paper, we also use the term \emph{Monte Carlo method}
to refer to any randomized algorithm.

If $A_n^\omega \colon F \to G$ does not depend on $\omega$, the algorithm is called \emph{deterministic} and we write $A_n \colon F \to G$. The corresponding error notion is
\begin{equation*}
  e^{\deter}(n,S) := \inf_{A_n} \sup_{\|f\|_F \leq 1} \|A_n(f) - S(f)\|_G \, ,
\end{equation*}
where the infimum is taken with respect to all 
deterministic algorithms with information cost at most $n$. 
An algorithm is called non-adaptive
if its information mapping $N$ is non-adaptive,
otherwise it is called adaptive.
We define 
\begin{equation*}
  e^{\ran, \nonada}(n,S) := \inf_{(A_n^\omega)_{\omega \in \Omega}} \sup_{\|f\|_F \leq 1} \expect \|A_n(f) - S(f)\|_G 
\end{equation*}
as above, but the infimum is taken over the set of 
non-adaptive algorithms.
By writing $e^{\ran,\lin}$ we indicate the restriction to algorithms $A_n$
based on linear maps $\phi^\omega$ and $N^\omega$.
(A slight generalization of these error notions allows for algorithms with \emph{varying cardinality} where the amount of information $n$ may be random or even adaptively chosen and is only bounded in terms of its expected value.
It turns out, that, at least in the non-adaptive setting, varying cardinality $n(\omega)$ cannot improve much over the setting with strict cost bound,
see~\cite[pp.~289/290]{H92} or \cite[Lem~1.4]{Ku17}.)

For the main part of this paper,
we study 
the identical embedding between finite dimensional sequence spaces,
\begin{equation*}
  S \colon \ell_p^m \to \ell_q^m,\quad  \vec{x} \mapsto \vec{x} \,,
\end{equation*}
where $1 \leq p,q \leq \infty$ and $m \in \N$.
We will write $\ell_p^m \embed \ell_q^m$ as a shorthand for 
the sequence space embedding $S$,
and $[m] := \{1,\ldots,m\}$ for the index set of a vector $\vec{x} = (x_1,\ldots,x_m) \in \R^m$.
We consider approximation with incomplete information, namely $n < m$, say, $m \geq 4n$.
A particular interest lies in extreme cases with $n \ll m$,
or in other words, the asymptotic behaviour for $\frac{n}{m} \to 0$,
or even just $m \to \infty$ for fixed $n$.
Sequence space embeddings play an 
important role in understanding more general approximation problems, for instance contrasting approximation rates for different classes of algorithms: Linear vs.\ non-linear, adaptive vs. non-adaptive, deterministic vs. randomized.
We quickly review classical results for three important special cases,
for a comprehensive overview of deterministic approximation theory in sequence spaces, see~\cite[Chap~VI]{Pin85} and \cite[Chap~10]{FR2013CS}.

\begin{description}
    \item[$\ell_1^m \embed \ell_2^m$:] From 
      Kashin~(1974)~\cite{Ka74} and Garnaev, Gluskin (1984)~\cite{GG84} 
      we know deterministic rates,
      namely, for $m \geq 2n$ we have
      \begin{equation} \label{eq:KGG12}
        e^{\deter}(n,\ell_1^m \embed \ell_2^m)
          \asymp \min\left\{1,\, \sqrt{\frac{\log\frac{m}{n}}{n}}\right\} \,.
      \end{equation}
      Upper bounds in this case are achieved by using
      a non-linear reconstruction $\phi(\vec{y}) := \argmin\limits_{\vec{z} \in \R^m\colon N\vec{z} = \vec{y}} \|\vec{z}\|_1$
      based on suitable non-adaptive information \mbox{$\vec{y} = N\vec{x}$}.
      Non-linearity, indeed, is a necessary feature as lower bounds for linear randomized methods show: Math\'e (1991)~\cite[Sec~3, Lem~2 (ii)]{Ma91} implies that for $m \geq 2n$,
      \begin{equation*}
        e^{\ran,\lin}(n,\ell_1^m \embed \ell_2^m)
          \asymp e^{\deter,\lin}(n,\ell_1^m \embed \ell_2^m)
          \asymp 1 \,.
      \end{equation*}
      From Heinrich (1992)~\cite{H92} we know lower bounds for general randomized methods, namely,
      for $m \geq 2n$, he proves 
      \begin{equation} \label{eq:He12}
        e^{\ran}(n,\ell_1^m \embed \ell_2^m) \succeq n^{-1/2} \,.
      \end{equation}
      This bound shows that randomized methods cannot improve the $n$-dependence
      over deterministic algorithms,
      but it does not reflect an $m$-dependence.
    \item[$\ell_2^m \embed \ell_\infty^m$:]
      Here, in the deterministic setting, for $m \geq 2n$, no substantial approximation is possible,
      \begin{equation*}
        e^{\deter}(n,\ell_2^m \embed \ell_\infty^m) \asymp 1 \,,
      \end{equation*}
      while using linear (thus non-adaptive) randomized 
      algorithms, Math\'e ~\cite{Ma91} shows 
      that for $m \geq 2n$,
      \begin{equation*}
        e^{\ran}(n,\ell_2^{m} \embed \ell_\infty^{m})
        \preceq  \sqrt{\frac{\log m}{n}}\,.
      \end{equation*}
      The algorithms used for this result
      consist of a linear information map representable
      as $(n\times m)$-matrix $N$ with i.i.d.\ standard Gaussian entries,
      and a linear reconstruction map representable as the matrix $\varphi^\omega = \frac{1}{n} (N^\omega)^\top \in \R^{m \times n}$.
      Once again, from Heinrich~\cite{H92} we know a lower bound for $m \geq 2n$
      without an $m$-dependence
      \begin{equation} \label{eq:He2oo}
        e^{\ran}(n,\ell_2^m \embed \ell_\infty^m) \succeq \sqrt{\frac{\log n}{n}} \,.
      \end{equation}
    \item[$\ell_1^m \embed \ell_\infty^m$:]
      In the deterministic setting there exist linear methods that are optimal,
      see \cite{CW04} or \cite[Thms~4.5 and~4.8]{NW08},
      yet the precise rates remain unknown.
      The rates from non-linear 
      deterministic methods in the setting $\ell_1^m \embed \ell_2^m$ with the stricter error criterion, see~\eqref{eq:KGG12},
      directly translate into upper bounds for $\ell_1^m \embed \ell_\infty^m$.
      Lower bounds reflecting a logarithmic $m$-dependence can be found in~\cite[Prop~2.1]{FPRU10} and the rate in $n$ is also known to be optimal.
      Math\'e~\cite[Sec~3, Lem~2 (ii)]{Ma91} showed
      for $m \ge 2n$ that
      \begin{equation*}
        e^{\ran,\lin}(n,\ell_1^m \embed \ell_\infty^m) \succeq n^{-1/2} \,,
      \end{equation*}
      hence, linear randomized methods cannot 
      improve the rate in $n$ over deterministic methods,
      yet the optimal $m$-dependence remains unknown.
      Combining the non-linearity for $\ell_1^m \embed \ell_2^m$ and the randomness for $\ell_2^m \embed \ell_\infty^m$, 
      Heinrich~\cite{H92} constructed a two-stage 
      algorithm showing an improved main rate (for $m \geq 4n$), together with a lower bound:
      \begin{equation} \label{eq:He1oo}
        \frac{\sqrt{\log n}}{n}
          \preceq e^{\ran}(n,\ell_1^{m} \embed \ell_\infty^{m})
          \preceq  \frac{\sqrt{(\log \frac{m}{n})(\log m)}}{n} \,.
        \end{equation}
      Here as well, the lower bound lacks an $m$-dependence.
\end{description}

Our main contribution is a lower bound for non-adaptive randomized approximation that reflects the $m$-dependence. 
Theorem~\ref{thm:l1->loo,non-ada} shows
for $n \leq c_0 \sqrt{\log m}$ (with a suitable constant $c_0 > 0$) that
\begin{equation*}
  e^{\ran,\nonada}(n,\ell_1^m \embed \ell_\infty^m) \geq \eps_0 > 0 \,.
\end{equation*}
This result has two main consequences which we state and comment below.

\begin{consequence}
    Non-compact operators 
    between arbitrary Banach spaces
    are not approximable using non-adaptive Monte Carlo methods,
    see Theorem~\ref{Thm:Approxmability}.
\end{consequence}

This statement relies on the fact
that the sequence space embedding $\ell_1 \embed \ell_\infty$
is known to be a universal non-compact operator~\cite{J71},
see Proposition~\ref{prop:UniversalNonComp}.
The 
first consequence therefore justifies our focus on lower bounds for the problem $\ell_1^m \embed \ell_\infty^m$.
On an elementary level, among all sequence space embeddings $\ell_p^m \embed \ell_q^m$ with $1 \leq p,q \leq \infty$,
the case $(p,q) = (1,\infty)$ is the easiest from an approximation point of view:
The $\ell_1$-unit ball $B_1^m$ is the smallest among all $\ell_p$-balls,
further, the $\ell_\infty$-norm is the least punishing among all $\ell_q$-norms we might choose to measure the error.
Hence, our lower bound holds for all embeddings
from $\ell_p^m$ to $\ell_q^m$.

Lower bounds are also studied in the field of sparse 
recovery with a different 
error criterion,
see in particular Woodroff et al.~\cite{BIPW10,PW12}
and also \eqref{eq:crite} in Section~\ref{sec:UB}.
The lower bounds in~\cite{PW12} can be transferred to lower bounds for $\ell_2^m \embed \ell_\infty^m$
and cover both the adaptive and the non-adaptive setting.
Our proof technique as described in 
Section~\ref{sec:bakhvalov} is a substantial modification of their approach suited to accommodate the much smaller input set $B_1^m$ in the problem $\ell_1^m \embed \ell_\infty^m$.
However,
we only discuss the non-adaptive case, the adaptive case is left for future research.
The setting in~\cite{BIPW10} resembles our setting with input space $\ell_1^m$ and contains a lower bound for non-adaptive algorithms.
Their proof, relying on communication complexity, 
seems not to
translate easily to
our error criterion, though.
On the other hand, upper bounds in the sparse recovery setting~\cite{IPW11,LNW17}
can easily be used to prove upper bounds in our setting, see Section~\ref{sec:UB}
where we find upper bounds using adaptive algorithms which lead to the second main consequence.

\begin{consequence}
    Adaptive Monte Carlo methods are much better 
    than non-adaptive methods for some linear problems. 
    The gap in the error can be of the order 
    $n^{1/2} (\log n)^{-1/2}$
    for the embedding 
     $\ell_1^m \embed \ell_2^m$,
       see Corollary~\ref{cor:adagap}. 
\end{consequence}

It is well known that for linear problems in the deterministic setting,
adaption can reduce the error by a factor $2$ at most,
hence, no difference in the approximation rates can be observed,
see the survey \cite{No96} for this and for related results. 
It was not known whether such a result also holds for randomized 
algorithms. 
The problem was posed in \cite{No96} and
restated in \cite[Open Problem 20]{NW08}. 

This open problem was recently solved by Stefan Heinrich~\cite{He22,He23b,He23,He24}  
who studied (parametric) integration and approximation in mixed $\ell_p(\ell_q)$-spaces using standard information (function evaluations).
In~\cite{He23} he proved 
that in the randomized setting a gap of order $n^{1/2} ( \log n)^{-1}$
is possible between adaptive and non-adaptive approximation.
Our gap is slightly larger in the logarithmic term and
one can prove that the largest possible gap is 
at most $n^{1/2}$ for mappings into a Hilbert space, 
see Remark~\ref{rem:maximalgap}. 
Note that our result is obtained for the class of all 
linear functionals as admissible information rather than 
just function evaluations,
hence, in a 
different algorithmic setting which requires different proof techniques.

The problem $\ell_1^m \embed \ell_\infty^m$ is notable due to the fact
that in the deterministic setting, additional algorithmic features like non-linearity
or adaptivity cannot improve the error whatsoever,
but in the randomized setting, the algorithmic features of non-linearity and adaptivity
both bring about significant improvements.

\subsection*{Notation}
For a normed space $(F,\|\cdot\|_F)$ we denote its closed unit ball by $B_F$,
the distance between two sets $F_1, F_2 \subset F$ is defined via
\begin{equation*}
  \dist_F(F_1, F_2) := \inf_{\substack{f_1 \in F_1 \\ f_2 \in F_2}} \|f_1 - f_2\|_F \,.
\end{equation*}
In spaces $\ell_p^m$ we use the abbreviations $\|\cdot\|_p := \|\cdot\|_{\ell_p^m}$ for the norm,
$B_p^m := B_{\ell_p^m}$ for the unit ball, and $\dist_p := \dist_{\ell_p^m}$ for the distance.
When talking about operators between normed spaces, we always mean bounded linear operators.

For functions $f,g \colon M \to (0,\infty)$ on a domain $M$ we write $f \preceq g$
if there exists a constant $C > 0$ such that for all $x \in M$ we have $f(x) \leq C g(x)$.
If $f \preceq g$ and $f \succeq g$, we write $f \asymp g$
and call $f$ and $g$ \emph{weakly asymptotically equivalent}.

Throughout this paper we work with an underlying probability space $(\Omega,\mathcal{A},\P)$.
For $\Omega' \subset \Omega$ we denote its complement by $\Omega'^c := \Omega \setminus \Omega'$.
Furthermore, all the measurable spaces are tacitly assumed to be Polish, so that one can use the disintegration theorem to produce conditional measures.
We use the following conventions concerning conditional probabilities
with respect to a random variable $\vec{Y}$ on a measurable space $H$:
We write $\P^{\vec{Y}}(\,\cdot\,) := \P(\vec{Y} \in \cdot\,)$ for the distribution of $\vec{Y}$,
and let the mapping $H \times \mathcal{A} \to [0,1],\, 
(\vec{y},A) \mapsto \P_{\vec{y}}(A) = \P(A \mid \vec{Y} = \vec{y})$
denote a regular conditional probability
(for details see, e.g., \cite[Chap~8]{Kl14})
such that
\begin{equation*}
  \P(\,\cdot\,) = \int_{H} \P_{\vec{y}}(\,\cdot\,) \dint\P^{\vec{Y}}(\vec{y}) \,.
\end{equation*}
Accordingly, we write $\expect_{\vec{y}}[\;\cdot\;] = \expect[\;\cdot\mid \vec{Y} = \vec{y}]$
for integration with respect to $\P_{\vec{y}}$.

\section{A lower bound for non-adaptive algorithms}

\subsection{The average-case setting}
\label{sec:bakhvalov}

We make use of Bakhvalov's technique for Monte Carlo lower bounds~\cite{Bakh59,H92,Ma93}.
Namely, for a (sub-)probability measure $\mu$ supported on the unit ball $B_F$ of the normed space~$F$ (that is, $\mu \geq 0$ and $\mu(F) = \mu(B_F) \leq 1$), consider the \emph{average case error}
\begin{equation*}
  e^\mu(n,S) := \inf_{A_n} \int \|A_n(f) - S(f)\|_G \dint \mu(f) \, ,
\end{equation*}
where the infimum is taken with respect to all measurable deterministic algorithms using at most $n$ information functionals.
Exploiting measurability of randomized algorithms as mappings $\Omega \times F \to G$,
we have the lower bound
\begin{equation} \label{eq:Bkahvalov}
  e^{\ran}(n,S) \geq e^\mu(n,S) \,.
\end{equation}
Restricting to non-adaptive deterministic algorithms in the $\mu$-average setting
yields lower bounds for non-adaptive randomized algorithms.
The lower bound results cited in the introduction
rely on truncated Gaussian measures on the unit ball $B_p^m \subset \ell_p^m$
intersected with a subspace, see Heinrich~\cite{H92}.
The problem with scaled $m$-dimensional standard Gaussian measures
is that for $p < q$ already the \emph{initial error}
of their respective truncations $\mu$ on the unit ball $B_p^m$ decays to zero,
no matter how we change the scaling:
\begin{equation*}
  e^\mu(0,\ell_p^m \embed \ell_q^m)
    = \int \|\vec{x}\|_{q} \dint \mu(\vec{x})
    \xrightarrow[m \to \infty]{} 0 \,.
\end{equation*}
For that reason, Heinrich~\cite{H92} only considered $4n$-dimensional Gaussian measures (if $m \geq 4n$) resulting in lower bounds that do not reflect the increasing difficulty of the problem with growing $m$.

In what follows, we will consider a random input of the form
\begin{equation} \label{eq:X=u+sW}
  \vec{X} := \vec{u}_I + \sigma\vec{W}\,.
\end{equation}
Here, $(\vec{u}_i)_{i \in [M]}$, $M \in \N$, is a collection of
points from the $\ell_p^m$-unit ball $B_p^m \subset \R^m$ (we are especially interested in the $\ell_1$-ball $B_1^m$),
and $I \sim \Uniform [M]$
is a selector random variable which chooses the random point $\vec{u}_I$.
The disturbance $\vec{W}$ is generated by projecting a standard Gaussian vector $\vec{Z} \sim \Normal(\vec{0},\id_m)$ onto $2n$ randomly chosen coordinates
and we introduce a scaling $\sigma > 0$.
In detail, we define the set of subsets
\begin{equation}\label{eq:setsJn'}
  \mathcal{J}_{2n}
    := \left\{\mathfrak{j} \subseteq [m] \colon \#\mathfrak{j} = 2n\right\} \,,
\end{equation}
and write $P_{\mathfrak{j}} \colon \R^m \to \R^m$ for the projection onto the coordinates given by an index set $\mathfrak{j} \subseteq [m]$.
Taking $\mathfrak{J} \sim \Uniform{\mathcal{J}_{2n}}$, we define $\vec{W} := P_{\mathfrak{J}} \vec{Z}$, i.e.
\begin{equation*}
  W_j = \begin{cases}
          Z_j &\text{if } j \in \mathfrak{J} \,, \\
          0 & \text{if } j \notin \mathfrak{J} \,.
        \end{cases}
\end{equation*}
Here, $I$, $\vec{Z}$, and $\mathfrak{J}$ are independent random variables.
The distribution of $\vec{X}$ is a measure
\begin{equation}
  \wt{\mu} := \P^{\vec{X}}
\end{equation}
on $\R^m$. We assume that, with suitably chosen radius $r_0 > 0$, the $\ell_{p}^m$-balls
\begin{equation}\label{eq:Fi_general}
  F_i := \vec{u}_i + r_0 B_p^m
\end{equation}
around each point $\vec{u}_i$ fit inside the unit ball $B_p^m$ and are separated by $c > 0$ in the $\ell_q^m$-metric,
\begin{equation*}
  F_i \subseteq B_p^m \qquad\text{and}\qquad
  \dist_{q}(F_i,F_{i'}) \geq c \quad\text{for } i \neq i'.
\end{equation*}
We truncate the measure $\wt{\mu}$ with respect to the following event,
\begin{equation} \label{eq:Omega'}
  \Omega' := \left\{\omega \in \Omega \colon \vec{X}(\omega) \in F_{I(\omega)}\right\}
    = \left\{\omega \in \Omega \colon \|\vec{W}(\omega)\|_p \leq \frac{r_0}{\sigma}\right\} \,,
\end{equation}
namely, we define the truncated measure
\begin{equation}\label{eq:mu-general-truncation}
  \mu(\,\cdot\,) := \P\bigl(\Omega' \cap \{\vec{X} \in \,\cdot\,\}\bigr) \,.
\end{equation}
That way we ensure in particular that $\mu$ is supported on~$B_p^m$
and we can indeed apply Bakhvalov's technique~\eqref{eq:Bkahvalov}.
The scaling $\sigma > 0$ of the disturbance
has an influence on the truncation probability
\begin{equation}
  \delta := \wt{\mu}(\R^m) - \mu(\R^m) = \P\left(\Omega'^c\right) \,.
\end{equation}
Integrating with respect to the truncated measure $\mu$ is equivalent to taking the
expectation multiplied with the indicator function of $\Omega'$,
that is, the corresponding $\mu$-average error of a deterministic (measurable) algorithm $A_n \colon \R^m \to \R^m$ is
\begin{equation*}
  \int \|A_n(\vec{x}) - \vec{x}\|_q \dint\mu(\vec{x})
    = \expect \left[ \|A_n(\vec{X}) - \vec{X}\|_q \cdot \ind_{\Omega'} \right] \,.
\end{equation*}
In the non-adaptive setting, $A_n = \phi \circ N$ is built with a linear information mapping $N \colon \R^m \to \R^n$ which can be identified with a matrix $N \in \R^{n \times m}$.
By subsequent Lemma~\ref{lem:cond-e-avg},
finding lower bounds on the error of the average case problem
reduces to finding lower bounds on the failure probability for identifying $I$
from measurements $\vec{Y} := N\vec{X}$.

Measures similar to the distribution of $\vec{X}$ in \eqref{eq:X=u+sW} 
have been considered in \cite{PW12} in a compressed sensing model with $\ell_2$-noise.
If the source space is $\ell_2^m$, then instead of $2n$-dimensional disturbances $\sigma \vec{W}$ we may directly work with the $m$-dimensional Gaussian vector $\sigma\vec{Z}$ and omit the projection step.
The reason for the projection step 
is that,
in order to keep the truncation probability~$\delta$ small,
in the $\ell_1$-setting the scaling factor $\sigma$ would need to shrink prohibitively fast with growing dimension of the Gaussian measure.
In the $\ell_2$-setting, however, the projection step is not necessary;
the analysis of adaptive algorithms then is way easier and has indeed been conducted in~\cite{PW12}.

For the analysis of the average case setting with different directions $\mathfrak{j} \subset [m]$
of the disturbance $\vec{W}$,
 we need a monotonicity argument for information.

\begin{lemma}\label{lem:monotonInfo}
  Let $S \colon F \to G$ be a measurable map between Banach spaces and
  let $\vec{X}$ be a random variable with values in $F$.
  Consider random variables $\vec{Y}$ and $\wt{\vec{Y}}$ with values in measurable spaces $H$ and $\wt{H}$, respectively,
  so-called \emph{information},
  and assume that
  $\vec{Y}$ can be written as a measurable function of $\wt{\vec{Y}}$.

  Then, for any event $\Omega' \subset \Omega$, we have
  \begin{equation*}
    \inf_{\wt{\phi}\colon \wt{H} \to G}
        \expect \left[\|S(\vec{X}) - \wt{\phi}(\wt{\vec{Y}})\|_G \cdot \ind_{\Omega'}\right]
      \leq \inf_{\phi \colon H \to G} 
        \expect \left[\|S(\vec{X}) - \phi(\vec{Y})\|_G \cdot \ind_{\Omega'}\right] \,,
  \end{equation*}
  where the infima are taken over measurable mappings.
\end{lemma}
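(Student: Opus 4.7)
The plan is to exploit the fact that $\wt{\vec{Y}}$ carries at least as much information as $\vec{Y}$: any reconstruction that is allowed to use $\vec{Y}$ can be mimicked by a reconstruction that uses $\wt{\vec{Y}}$ and first reduces it to $\vec{Y}$. This turns a comparison of infima into a pointwise inequality on the objective.

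First, I would fix the measurable function $g \colon \wt{H} \to H$ that realizes $\vec{Y}$ as a function of $\wt{\vec{Y}}$, so that $\vec{Y} = g(\wt{\vec{Y}})$ almost surely. Given any measurable $\phi \colon H \to G$, I define the candidate reconstruction $\wt{\phi} := \phi \circ g \colon \wt{H} \to G$; this is measurable as a composition of measurable maps, so it is admissible in the infimum on the left-hand side. By construction $\wt{\phi}(\wt{\vec{Y}}) = \phi(g(\wt{\vec{Y}})) = \phi(\vec{Y})$ almost surely, and therefore
\begin{equation*}
  \expect\bigl[\|S(\vec{X}) - \wt{\phi}(\wt{\vec{Y}})\|_G \cdot \ind_{\Omega'}\bigr]
  = \expect\bigl[\|S(\vec{X}) - \phi(\vec{Y})\|_G \cdot \ind_{\Omega'}\bigr] \,.
\end{equation*}

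Second, since $\wt{\phi}$ is feasible on the left, taking the infimum on the left over all measurable $\wt{\phi}$ yields
\begin{equation*}
  \inf_{\wt{\phi} \colon \wt{H} \to G}
    \expect\bigl[\|S(\vec{X}) - \wt{\phi}(\wt{\vec{Y}})\|_G \cdot \ind_{\Omega'}\bigr]
  \leq
  \expect\bigl[\|S(\vec{X}) - \phi(\vec{Y})\|_G \cdot \ind_{\Omega'}\bigr] \,.
\end{equation*}
This holds for every measurable $\phi$, so taking the infimum over $\phi$ on the right gives the claim.

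Nothing deep is needed here; the only points to verify carefully are measurability of $\wt{\phi} = \phi \circ g$ (immediate from the measurability of $\phi$ and $g$) and the fact that the expectations are well defined, which is guaranteed by the standing measurability assumptions on $S$, $\vec{X}$, $\vec{Y}$, $\wt{\vec{Y}}$ and the indicator $\ind_{\Omega'}$. The mild technicality — and the only place one could get tripped up — is the existence of the measurable function $g$ representing $\vec{Y}$ in terms of $\wt{\vec{Y}}$; since $H$ and $\wt{H}$ are Polish (as stated in the notation paragraph), this is the assumption to invoke rather than a fact to prove.
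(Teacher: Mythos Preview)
Your proposal is correct and follows essentially the same approach as the paper: both take the measurable representation $\vec{Y} = g(\wt{\vec{Y}})$ (the paper calls it $f$), set $\wt{\phi} := \phi \circ g$, and observe that this makes every $\phi$-reconstruction available on the $\wt{\vec{Y}}$-side. Your write-up is simply more explicit about spelling out the infimum step and the measurability checks.
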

\begin{proof}
    Let $f:\wt{H} \rightarrow H$ be a measurable mapping 
    with $\vec{Y} = f(\wt{\vec{Y}})$ almost surely.
    For each measurable $\phi \colon H \to G$, the mapping $\phi \circ \vec{Y}$ 
    can be replicated via a measurable mapping $\phi \circ f \circ \wt{\vec{Y}}$, 
    that is, $\wt{\phi} = \phi \circ f$.
\end{proof}

As described beforehand, we will analyse the average case error $e^\mu(n,\ell_p^m \embed \ell_q^m)$ 
for which we only need to consider deterministic algorithms $A_n := \phi \circ N$ 
with information map $N \colon \ell_p^m \to \R^n$ and reconstruction map $\phi \colon \R^n \to \ell_q^m$.
There can only be one output $\phi(\vec{y}) \in \ell_q^m$ for one information vector $\vec{y} \in \R^n$.
This output shall be optimal with respect the truncated conditional distribution.
The following lemma provides an error bound for the conditional average error,
in later applications we take $\P_y$ for $\P$ and $\expect_y$ for $\expect$.

\begin{lemma} \label{lem:cond-e-avg}
  Let $S: F \to G$ be a measurable map between Banach spaces, and
  consider a family of sets $(F_i)_{i = 1}^M$ in $F$
  where there is
  a constant $c > 0$ such that
  \begin{equation*}
    \dist_G(S(F_i),S(F_{i'})) \geq c
  \end{equation*}
  for distinct indices $i,i' \in [M]$. Put $F' := \bigcup_{i=1}^M F_i$.
Furthermore, let $\vec{X}$ be a random variable with values in $F$
and consider an event
$\Omega' \subset \{ \vec{X} \in F'\}$. Then
\begin{equation*}
    \inf_{g \in G} \mathbb{E}\left[\lVert S(\vec{X}) - g  \rVert_G \cdot \ind_{\Omega'}\right] \geq c \min \left\{  \frac{\P(\Omega')}{2}, \P(\Omega') - \max_{i \in [M]} \P(\Omega' \cap \{\vec{X} \in F_i\}) \right\} \,.
\end{equation*}
\end{lemma}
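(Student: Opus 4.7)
The plan is to decompose the expectation over the disjoint cover $\{\vec{X} \in F_i\}_{i=1}^M$ of the event $\Omega'$, replace $\|S(\vec{x}) - g\|_G$ by the corresponding distance $d_i := \dist_G(g, S(F_i))$, and then optimize the resulting affine expression in $d_i$ and $p_i := \P(\Omega' \cap \{\vec{X} \in F_i\})$ using the $c$-separation of the images $S(F_i)$.

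First I would observe that the sets $F_i$ are pairwise disjoint: if $\vec{x} \in F_i \cap F_{i'}$ with $i \neq i'$, then $S(\vec{x}) \in S(F_i) \cap S(F_{i'})$, contradicting $\dist_G(S(F_i), S(F_{i'})) \geq c > 0$. Since $\Omega' \subset \{\vec{X} \in F'\}$, this yields the disjoint decomposition $\ind_{\Omega'} = \sum_{i=1}^{M} \ind_{\Omega' \cap \{\vec{X} \in F_i\}}$ almost surely. The trivial bound $\|S(\vec{x}) - g\|_G \geq d_i$ for $\vec{x} \in F_i$ then gives
\begin{equation*}
    \expect\bigl[\|S(\vec{X}) - g\|_G \cdot \ind_{\Omega'}\bigr] \;\geq\; \sum_{i=1}^M d_i\, p_i .
\end{equation*}

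Next I would fix $i^* \in \argmin_i d_i$ and abbreviate $t := d_{i^*}$ and $P := p_{i^*}$. If $t \geq c/2$, then the triangle inequality combined with the separation assumption forces $d_i \geq c/2$ for every $i$, so $\sum_i d_i p_i \geq (c/2) \P(\Omega')$. Otherwise the bound $d_i + d_{i^*} \geq \dist_G(S(F_i), S(F_{i^*})) \geq c$ for $i \neq i^*$ gives $d_i \geq c - t$, whence
\begin{equation*}
    \sum_i d_i p_i \;\geq\; tP + (c - t)\bigl(\P(\Omega') - P\bigr) \;=\; c\bigl(\P(\Omega') - P\bigr) + t\bigl(2P - \P(\Omega')\bigr) .
\end{equation*}
A case analysis on the sign of $2P - \P(\Omega')$ (take the extreme $t = 0$ when the coefficient is positive and $t = c/2$ when it is negative) shows that this expression is always at least $c \cdot \min\bigl\{\P(\Omega')/2,\, \P(\Omega') - P\bigr\} \geq c \cdot \min\bigl\{\P(\Omega')/2,\, \P(\Omega') - \max_i p_i\bigr\}$. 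Taking the infimum over $g \in G$ concludes the argument.

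The main subtlety will be preserving the constant $c$ rather than $c/2$ in the second branch of the minimum. A crude argument that uses only $d_i \geq c/2$ for $i \neq i^*$ loses a factor of two; the improvement comes from using the sharper triangle bound $d_i \geq c - d_{i^*}$ together with the nonnegative contribution $d_{i^*} p_{i^*}$ from the closest set, producing the favourable cancellation in the affine expression $c(\P(\Omega') - P) + t(2P - \P(\Omega'))$ that is then optimized over $t$.
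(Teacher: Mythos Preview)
Your proof is correct and follows essentially the same approach as the paper. Both arguments decompose the expectation over the disjoint events $\{\vec{X}\in F_i\}$, pick the index closest to $g$, use the triangle inequality to get $d_i \geq c - d_{i^*}$ for the others, and minimize the resulting affine function of $d_{i^*}$ over $[0,c/2]$ before replacing $p_{i^*}$ by $\max_i p_i$; your explicit case split on $t\lessgtr c/2$ corresponds exactly to the paper's piecewise definition of $h_t$.
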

\begin{proof}
  Fix $g \in G$ and assume that $d := \dist_G(g, S(F_M)) \leq \dist_{G}(g,S(F_i))$
  for all $i \in [M-1]$.
  We write 
  $t := \P(\Omega' \cap \{ \vec{X} \in F_M\})$ 
  and 
  $T := \P(\Omega')$.
  From the triangle inequality, for $i \in [M-1]$, we have
  \begin{align*}
    c &\leq \dist_G(S(F_i), S(F_{M})) \leq \dist_G(S(F_i), g) + \underbrace{\dist_G(g, S(F_M))}_{= d} \\
    &\quad\Longrightarrow\quad 
    \dist_G(S(F_i), g) \geq c - d\,.
  \end{align*}
  This leads to
  \begin{align*}
 \mathbb{E}\left[\lVert S(\vec{X}) - g  \rVert_G \cdot \ind_{\Omega'}\right] 
 &= \sum_{i = 1}^M\mathbb{E}\left[ \lVert S(X) - g \rVert_G \cdot \ind_{\Omega'}\cdot \ind_{\{\vec{X} \in F_i\}} \right]
 \\
 & \geq \sum_{i = 1}^M \P\bigl(\Omega' \cap \{\vec{X} \in F_i\}\bigr) \cdot \dist_G(S(F_i), g)
 \\
 & \geq \sum_{i = 1}^{M-1} \P\bigl(\Omega' \cap \{\vec{X} \in F_i\}\bigr) \cdot \max(d,c-d) + td.
  \end{align*}
  The last expression above is equal to $h_t(d)$ where
  \begin{align*}
   h_t: (0, \infty) \rightarrow \R, \quad 
     & d \mapsto \begin{cases}
  (2t - T)d +Tc - tc & \text{ if }  d \leq \frac{c}{2} \,,
  \\
   Td  & \text{ if }  d > \frac{c}{2} \,.
  \end{cases}
  \end{align*}
  
  The minimum of $h_t(\cdot)$ is to be found for $d \in [0, \frac{c}{2}]$, namely, due to the linearity, in one of the endpoints:
  \begin{equation*}
    h_t(0) = (T-t) \cdot c \,, \qquad
    h_t\left(\frac{c}{2}\right) = T \cdot \frac{c}{2} \,.
  \end{equation*}
  This shows
  \begin{equation*}
    \mathbb{E}\left[ \lVert S(X) - g  \rVert_G \cdot \ind_{\Omega'}  \right]
        \geq c \cdot \min\left\{\frac{T}{2},\, T-t\right\} \,,
  \end{equation*}
  where  $t = \P(\Omega' \cap \{ \vec{X} \in F_M\})$ 
  and 
  $T = \P(\Omega')$.
  Ending up with $\P(\Omega' \cap \{ \vec{X} \in F_M\})$ in this formula, however, was only possible under the simplifying assumption 
  that the output $g$ is closest to $S(F_M)$.
  Replacing $t$ by $\max_{i \in [M]}\P(\Omega' \cap \{\vec{X} \in F_i\})$, 
  we account for the possiblity of~$g$ being closer to another set $S(F_i)$.
\end{proof}

Recall that for the truncated measure $\mu$,
knowing the index $I=i$ means knowing in which of the balls $F_i = \vec{u}_i + r_0 B_p^m$
the input $\vec{X}$ lies.

\begin{lemma} \label{lem:truncation}
  Let $S: F \to G$ be a measurable map between Banach spaces.
    Consider a family of sets $(F_i)_{i = 1}^M$ in $F$ where there is a constant $c > 0$ such that
  \begin{equation*}
    \dist_G(S(F_i),S(F_{i'})) \geq c
  \end{equation*}
  for distinct indices $i,i' \in [M]$.

  Let $\vec{X}$ be a random variable with values in $F$,
  let $I$ be a random variable with values in $[M]$, and consider the event
  \begin{equation*}
    \Omega' = \left\{\omega \in \Omega \colon \vec{X}(\omega) \in F_{I(\omega)}\right\}
  \end{equation*}
  with $\delta := \P(\Omega'^c)$.
  Furthermore, let $\vec{Y}$ be a random variable with values in some measurable space $H$,
  the so-called \emph{information}.
  We define the ``distinctness'' function
  \begin{equation*}
    D_{I\mid\vec{Y}}(\vec{y}) := \max_{i \in [M]} \P(I = i \mid \vec{Y} = \vec{y}) \,.
  \end{equation*}
  
  Then for any measurable reconstruction map $\phi \colon H \to G$
  we have the average case lower bound
  \begin{equation*}
    \expect \bigl[ \|S(\vec{X}) - \phi(\vec{Y})\|_G \cdot \ind_{\Omega'}\bigr]
      \geq c \cdot \left(\frac{1}{2} \cdot
                    \P\left(D_{I\mid\vec{Y}}(\vec{Y}) \leq \frac{1}{2}\right)
                - \delta\right) \,.
  \end{equation*}
\end{lemma}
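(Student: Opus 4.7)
The plan is to apply Lemma~\ref{lem:cond-e-avg} pointwise in $\vec{y}$ with $g := \phi(\vec{y})$, and then integrate the resulting conditional bound against $\P^{\vec{Y}}$. Using the abbreviations $T(\vec{y}) := \P_{\vec{y}}(\Omega')$ and $t(\vec{y}) := \max_{i \in [M]} \P_{\vec{y}}(\Omega' \cap \{\vec{X} \in F_i\})$, the conditional version of that lemma delivers
\begin{equation*}
    \expect_{\vec{y}}\!\bigl[\|S(\vec{X}) - \phi(\vec{y})\|_G \cdot \ind_{\Omega'}\bigr]
    \geq c \cdot \min\!\left\{\frac{T(\vec{y})}{2},\, T(\vec{y}) - t(\vec{y})\right\}.
\end{equation*}

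Next I would replace $t(\vec{y})$ by $D_{I\mid\vec{Y}}(\vec{y})$. The sets $F_i$ are pairwise disjoint: otherwise a common point $x \in F_i \cap F_{i'}$ would force $S(x) \in S(F_i) \cap S(F_{i'})$, contradicting $\dist_G(S(F_i), S(F_{i'})) \geq c > 0$. On the event $\Omega' = \{\vec{X} \in F_I\}$, disjointness identifies $\{\vec{X} \in F_i\}$ with $\{I = i\}$, so
\begin{equation*}
    \P_{\vec{y}}\bigl(\Omega' \cap \{\vec{X} \in F_i\}\bigr)
    = \P_{\vec{y}}\bigl(\Omega' \cap \{I = i\}\bigr)
    \leq \P_{\vec{y}}(I = i),
\end{equation*}
and taking the maximum over $i$ yields $t(\vec{y}) \leq D_{I\mid\vec{Y}}(\vec{y})$. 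It therefore suffices to prove the elementary pointwise inequality
\begin{equation*}
    \min\!\left\{\frac{T}{2},\, T - D\right\}
    \;\geq\; \frac{1}{2}\ind_{\{D \leq 1/2\}} - (1 - T)
    \qquad \text{for all } T \in [0,1],\ D \in [0,1],
\end{equation*}
and to integrate it, using $\int (1 - T(\vec{y}))\dint\P^{\vec{Y}}(\vec{y}) = \P(\Omega'^c) = \delta$.

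The pointwise inequality itself follows from a short case analysis. For $D > \tfrac{1}{2}$, the right-hand side is $T - 1 \leq 0$, whereas $\min\{T/2, T-D\} \geq T - D \geq T - 1$. For $D \leq \tfrac{1}{2}$, the right-hand side equals $T - \tfrac{1}{2}$; in the subcase $D \leq T/2$ the minimum equals $T/2 \geq T - \tfrac{1}{2}$ (using $T \leq 1$), and in the subcase $D > T/2$ it equals $T - D \geq T - \tfrac{1}{2}$. The only genuine subtlety in the whole argument is choosing this particular inequality: it must be sharp enough to preserve the prefactor $\tfrac{1}{2}$ on the indicator while producing an error term that integrates to exactly $\delta$. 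Once this is pinned down, termwise integration against $\P^{\vec{Y}}$ completes the proof.
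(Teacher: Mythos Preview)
Your proof is correct and follows essentially the same route as the paper: apply Lemma~\ref{lem:cond-e-avg} conditionally with $g = \phi(\vec{y})$, replace $\max_i \P_{\vec{y}}(\Omega' \cap \{\vec{X} \in F_i\})$ by the larger quantity $D_{I\mid\vec{Y}}(\vec{y})$, establish a pointwise lower bound of the form $\tfrac{1}{2}\ind_{\{D \le 1/2\}} - \P_{\vec{y}}(\Omega'^c)$, and integrate. The only differences are cosmetic: you combine into a single pointwise inequality what the paper splits into two steps (first $\min\{T/2,\,T-D\} \ge \min\{1/2,\,1-D\} - (1-T)$, then $\min\{1/2,\,1-D\} \ge \tfrac{1}{2}\ind_{\{D\le 1/2\}}$), and you make explicit the disjointness of the $F_i$ that the paper uses tacitly when passing from $\{\vec{X}\in F_i\}$ to $\{I=i\}$ on $\Omega'$.
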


\begin{proof}
The case $\P(\Omega') = 0$ is trivial, so let us assume that $\P(\Omega') > 0$.
Defining the loss function
\begin{equation*}
  \ell(\vec{y})
    := \expect\bigl[\|S(\vec{X}) - \phi(\vec{Y})\|_G \cdot \ind_{\Omega'}
                    \;\big|\; \vec{Y} = \vec{y}\bigr] \,,
\end{equation*}
we can write
\begin{equation*}
  \expect \bigl[ \|S(\vec{X}) - \phi(\vec{Y})\|_G \cdot \ind_{\Omega'}\bigr] 
  = \expect\bigl[\ell(\vec{Y})\bigr] \,.
\end{equation*}
Using Lemma \ref{lem:cond-e-avg},
we see that for almost every $\vec{y}$,
\begin{align*}
\ell(\vec{y})
  &\geq c \cdot \min \left\{  \frac{\P_{\vec{y}}(\Omega')}{2}, \P_{\vec{y}}(\Omega') - \max_{i \in [M]} \P_{\vec{y}}( \Omega' \cap \{\vec{X} \in F_i\}) \right\} \nonumber\\
  &\geq c \cdot \min \left\{  \frac{\P_{\vec{y}}(\Omega')}{2}, \P_{\vec{y}}(\Omega') - \max_{i \in [M]} \P_{\vec{y}}(I = i) \right\} \nonumber\\
  &\geq c \cdot \left[  \min\left\{ \frac{1}{2}, 1 - D_{I\mid\vec{Y}}(\vec{y})\right\}  - \P_{\vec{y}}(\Omega'^c) \right] \,, \nonumber
\end{align*}
where in the last inequality we used
\begin{equation*}
    \frac{\P_{\vec{y}}(\Omega')}{2} 
      \geq \frac{1}{2} - \P_{\vec{y}}( \Omega'^c) \,,
\end{equation*}
and, using the notation $D_{I\mid\vec{Y}}(\vec{y}) = \max_{i \in [M]} \P_{\vec{y}}(I = i)$,
\begin{equation*}
    \P_{\vec{y}}(\Omega') - D_{I\mid\vec{Y}}(\vec{y})
        = 1 - D_{I\mid\vec{Y}}(\vec{y}) - \P_{\vec{y}}(\Omega'^c) \,.
\end{equation*}
Writing $\delta(\vec{y}) := \P_{\vec{y}}(\Omega'^c)$ with $\expect[\delta(\vec{Y})] = \delta$,
we finally arrive at
\begin{align*}
\expect\bigl[\ell(\vec{Y})\bigr]
  &\geq c \cdot \expect\left[
      \min\left\{ \frac{1}{2}, 1 -  D_{I\mid\vec{Y}}(\vec{Y}) \right\}
      - \delta(\vec{Y}) \right] \\
  &\geq c \cdot \left( \frac{1}{2} \cdot \P\left(D_{I\mid\vec{Y}}(\vec{Y}) \leq \frac{1}{2}\right) - \delta\right) \,.
\end{align*}
This is the assertion.
\end{proof}

\subsection{Separating Gaussian mixtures with linear information}

\begin{lemma} \label{lem:points-in-N01}
    Let $(\vec{v}_i)_{i=1}^M \subset \R^n$ be a family of points
    and $I \sim \Uniform[M]$ a random index.
    Define the random variable $\vec{Y} := \vec{v}_I + \vec{Z}$
    with $\vec{Z} \sim \Normal(\boldsymbol{0},\id_n)$ being a standard Gaussian vector in $\R^n$
    independent of $I$.
    For $r_1 > 0$ consider the set
    \begin{equation*}
        \mathfrak{i} := \left\{i \in [M] \colon \vec{v}_i \in r_1B_2^n\right\} \subseteq [M] \,,
    \end{equation*}
    and write $K := \# \mathfrak{i}$.
    Assume
    \begin{equation*}
      \frac{K}{M} \geq \frac{1}{2}
      \quad\text{and}\quad
      M \geq C \cdot \left(r_1 + 3 \sqrt{n}\right)^n \,,
    \end{equation*}
    where $C = \frac{10\euler^{3/2}}{3\pi} \approx 4.7552$,
    and write $D_{I\mid\vec{Y}}(\vec{y}) = \max_{i \in [M]} \P(I = i \mid \vec{Y} = \vec{y})$ as in Lemma~\ref{lem:truncation}.
    Then we have
    \begin{equation*}
        \P\left(D_{I\mid\vec{Y}}(\vec{Y}) \leq \frac{1}{2}\right)
            \geq \frac{1}{5} \,.
    \end{equation*}
\end{lemma}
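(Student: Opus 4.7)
The plan is to compute the posterior explicitly via Bayes' rule, decompose the ``bad'' event $\{D_{I|\vec{Y}}(\vec{Y}) > 1/2\}$ into the disjoint cells where a single index dominates, and bound the resulting probability by a volume estimate. Let $\varphi(\vec{z}) := (2\pi)^{-n/2} \exp(-\tfrac{1}{2}\|\vec{z}\|_2^2)$ denote the standard Gaussian density on $\R^n$. By Bayes' rule the posterior reads
\begin{equation*}
    p_i(\vec{y}) := \P(I = i \mid \vec{Y} = \vec{y}) = \frac{\varphi(\vec{y} - \vec{v}_i)}{M\, p(\vec{y})}, \qquad p(\vec{y}) := \tfrac{1}{M}\sum_{j = 1}^M \varphi(\vec{y} - \vec{v}_j),
\end{equation*}
so that $D_{I|\vec{Y}}(\vec{y}) = \max_i p_i(\vec{y})$. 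Introduce the ``dominance cells''
\begin{equation*}
    \mathcal{B}_i := \{\vec{y} \in \R^n \colon p_i(\vec{y}) > \tfrac{1}{2}\}, \qquad \mathcal{B} := \bigcup_{i = 1}^M \mathcal{B}_i = \{\vec{y} \colon D_{I|\vec{Y}}(\vec{y}) > 1/2\}.
\end{equation*}
These cells are pairwise disjoint because the posteriors sum to one, and the claim is equivalent to $\P(\vec{Y} \in \mathcal{B}) \leq 4/5$.

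I would localise with the ball $\mathcal{Y} := (r_1 + 3\sqrt{n})\,B_2^n$ and use
\begin{equation*}
    \P(\vec{Y} \in \mathcal{B}) \leq \P(\vec{Y} \in \mathcal{B} \cap \mathcal{Y}) + \P(\vec{Y} \in \mathcal{Y}^c).
\end{equation*}
The core step is the first term: on each cell $\mathcal{B}_i$ the defining inequality $p_i(\vec{y}) > 1/2$ rearranges to $p(\vec{y}) < 2\varphi(\vec{y} - \vec{v}_i)/M \leq 2\varphi(0)/M$, so
\begin{equation*}
    \P(\vec{Y} \in \mathcal{B} \cap \mathcal{Y}) = \sum_{i=1}^M \int_{\mathcal{B}_i \cap \mathcal{Y}} p(\vec{y}) \dint \vec{y} \leq \frac{2 \varphi(0)\, \Vol(\mathcal{Y})}{M} = \frac{2 (r_1 + 3\sqrt{n})^n}{M\, 2^{n/2}\, \Gamma(n/2 + 1)},
\end{equation*}
using the identity $\varphi(0)\,\Vol(B_2^n) = 1/(2^{n/2}\,\Gamma(n/2 + 1))$ and the disjointness of the $\mathcal{B}_i$. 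Invoking the hypothesis $M \geq C(r_1 + 3\sqrt{n})^n$ collapses this into a purely $n$-dependent quantity, monotone decreasing in $n$, and the specific constant $C = 10 e^{3/2}/(3\pi)$ is tuned to make this piece fit.

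For the tail term, $\vec{Y} \notin \mathcal{Y}$ forces either $I \notin \mathfrak{i}$ (total mass $(M - K)/M \leq 1/2$ by the assumption $K/M \geq 1/2$) or $I \in \mathfrak{i}$ combined with $\|\vec{Z}\|_2 > 3\sqrt{n}$, whose probability is a small uniform constant $\delta_G$ obtained from standard $\chi_n^2$ tail estimates. Summing the two contributions and using the choice of $C$ then gives $\P(\vec{Y} \in \mathcal{B}) \leq 4/5$, which is the claim.

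The main obstacle is numerical precision in the constant. Because $\varphi(0)\,\Vol(B_2^n) = (2^{n/2}\,\Gamma(n/2+1))^{-1}$ attains its maximum in the lowest dimensions and then decays super-exponentially, the binding case for $C$ is small $n$, and the unusual value $10 e^{3/2}/(3\pi)$ is engineered precisely to absorb the $\chi_1^2$ tail at level $9$ together with the one-dimensional Gaussian constants $\varphi(0) = 1/\sqrt{2\pi}$ and $\Vol(B_2^1) = 2$. Once $n$ is even moderately large, the volume contribution is tiny and there is plenty of slack; essentially all the difficulty lies in the careful bookkeeping uniform in $n$.
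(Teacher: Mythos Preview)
Your proposal is correct and follows essentially the same route as the paper: the paper also observes that $D_{I\mid\vec{Y}}(\vec{y}) > \tfrac{1}{2}$ forces the mixture density $p(\vec{y})$ below the uniform cap $2B = 2\varphi(0)/M$, localises to the ball of radius $r_1 + 3\sqrt{n}$, and combines the volume bound $2B\cdot\Vol(\mathcal{Y})$ with the Gaussian tail estimate $\P(\|\vec{Z}\|_2 > 3\sqrt{n}) \leq e^{-2n}$ together with $K/M \geq \tfrac{1}{2}$. The only cosmetic difference is that the paper lower-bounds the good event $\{p(\vec{Y}) \geq 2B\}$ directly via $\P(\vec{Y}\in\mathcal{Y}) - \P(\vec{Y}\in\mathcal{Y},\, p(\vec{Y})<2B)$ rather than upper-bounding its complement through your dominance cells $\mathcal{B}_i$, but the resulting numerical inequality is identical; the paper then processes the volume term via a Stirling lower bound on $\Gamma(n/2+1)$ to extract the constant $C$.
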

\begin{proof}
    The standard Gaussian vector $\vec{Z}$ possesses the probability density
    \begin{equation*}
        p_0(\vec{z}) := \frac{1}{(2\pi)^{n/2}} \exp\left(-\frac{\|\vec{z}\|_2^2}{2}\right)
    \end{equation*}
    which takes its maximum value $\frac{1}{(2\pi)^{n/2}}$ at the origin.
    The joint distribution of~$(I,\vec{Y})$ can thus be described by a family $\{p_i\}_{i=1}^M$ of functions on $\R^d$,
    \begin{equation*}
        p_i(\vec{y}) := \frac{p_0(\vec{y}-\vec{v}_i)}{M} \,,
        \quad \text{giving}\quad
            \P\bigl((I,\vec{Y}) \in A\bigr)
                = \sum_{i=1}^M \int_{\R^n} p_i(\vec{y}) \, \ind_{\{(i,\vec{y}) \in A\}} \dint \vec{y}\,,
    \end{equation*}
    where each of the functions $p_i$ is bounded above by $B := \frac{1}{(2\pi)^{n/2} M}$.
    The probability density of $\vec{Y}$ is given by $p = \sum_{i=1}^M p_i$.
    Furthermore, we can give the conditional probability of $I$ by
    \begin{equation*}
      \P\left(I = i \mid \vec{Y} = \vec{y}\right) = \frac{p_i(\vec{y})}{p(\vec{y})}
    \end{equation*}
    and estimate the probability of interest by
    \begin{equation*}
      \P\left(D_{I\mid\vec{Y}}(\vec{Y}) \leq \frac{1}{2}\right)
        \geq \P\bigl(p(\vec{Y}) > 2B\bigr) \,.
    \end{equation*}
    
    We use the concentration bound~\eqref{eq:|Z|>3sqrt(m)} applied to $\vec{Z}$ with $r_2 = 3\sqrt{n}$,
    \begin{equation*}
        \P\bigl(\|\vec{Z}\|_2 \leq r_2\bigr)
            = \P\bigl(\|\vec{Z}\|_2 \leq 3\sqrt{n}\bigr)
            \geq 1 - \euler^{-2n}
            \geq 1 - e^{-2}\,,
    \end{equation*}
    and find the following estimate:
    \begin{align*}
      \P\bigl(p(\vec{Y}) \geq 2B\bigr) 
            &\geq \P\bigl(\|\vec{Y}\|_2 \leq r_1+r_2\bigr)
                    - \P\bigl(\left(\|\vec{Y}\|_2 \leq r_1+r_2\right) 
                                \wedge \left(p(\vec{Y}) < 2B\right)\bigr)\\
            &\geq \P(I \in \mathfrak{i}) \cdot \P\bigl(\|\vec{Z}\|_2 \leq r_2\bigr)
                    - 2B \cdot (r_1+r_2)^n \cdot \lambda^n\left(B_2^n\right) \,.
    \end{align*}
    Here, in the second step, for the first term we applied the triangle inequality to $\vec{Y} = \vec{v}_I + \vec{Z}$ and used that $\vec{v}_i \leq r_1$ for $i \in \frak{i}$.
    The Lebesgue volume of the $n$-dimensional Euclidean unit ball is given by
    \begin{equation*}
      \lambda^n\left(B_2^n\right) = \frac{n^{n/2}}{\Gamma\left(\frac{n}{2} + 1 \right)} \,.
    \end{equation*}
    We use Stirling's formula to estimate the Gamma function from below:
    \begin{equation*}
        \Gamma\left(\frac{n}{2} + 1\right)
          \geq \sqrt{2\pi}
                \cdot \underbrace{\sqrt{\frac{n}{2} + 1}}_{\geq \sqrt{3/2}}
                \cdot \left(\frac{n}{2\euler}\right)^{n/2}
                \cdot \euler^{-1} 
                \cdot \underbrace{\left(1 + \frac{2}{n}\right)^{n/2}}_{ \in \left[\sqrt{3}, \euler\right]}
          \geq \frac{3\sqrt{\pi}}{\euler}
                \cdot \left(\frac{n}{2\euler}\right)^{n/2} \,.
    \end{equation*}
    With $r_2 = 3\sqrt{n}$ and the value for $B$, we thus obtain
    \begin{align*}
      \P\bigl(p(\vec{Y}) \geq 2B\bigr)
            &\geq \frac{K}{M} \left(1 - \euler^{-2n}\right)
             - \frac{2}{(2\pi)^{n/2} M} \cdot \left(r_1 + 3\sqrt{n}\right)^n \frac{n^{n/2}}{\Gamma\left(\frac{n}{2}+1\right)} \\
            &\geq \frac{K}{M} \left(1 - \euler^{-2n}\right)
            - \frac{2 \euler}{3\sqrt{\pi}} \cdot \left(\frac{\euler}{\pi}\right)^{n/2}
               \cdot \left(r_1 + 3\sqrt{n}\right)^n \cdot \frac{1}{M} \,.
    \end{align*}
    Suppose $\frac{K}{M} \geq \frac{1}{2}$, then $\frac{K}{M} \left(1-\euler^{-2n}\right) \geq \frac{1}{2}\left(1-\euler^{-2}\right) > \frac{2}{5}$.
    Assuming 
    \begin{equation*}
      M \geq \frac{10\euler}{3\sqrt{\pi}} \cdot \left(\frac{\euler}{\pi}\right)^{n/2} \cdot \, \left(r_1 + 3\sqrt{n}\right)^n \,,
    \end{equation*}
    the probability is at least $\frac{1}{5}$ as claimed.
    Since $\frac{\euler}{\pi} < 1$,
    we may simplify
    $\left(\frac{\euler}{\pi}\right)^{n/2} \leq \sqrt{\frac{\euler}{\pi}}$,
    the constant in the lemma can be chosen as
    $C := \frac{10\euler^{3/2}}{3\pi} \approx 4.7552$.
\end{proof}

The following statement generalizes Lemma~\ref{lem:points-in-N01} for arbitrary Gaussian measures with covariance matrix $\Sigma$. Concerning the notation $\sqrt{\Sigma}$, see Section~\ref{sec:gauss} in the appendix.

\begin{lemma} \label{lem:points-in-gauss}
    Let $(\wt{\vec{v}}_i)_{i=1}^M \subset \R^n$ be a family of points
    and $I \sim \Uniform[M]$ a random index.
    Define the random variable $\wt{\vec{Y}} := \wt{\vec{v}}_I + \wt{\vec{Z}}$
    with $\wt{\vec{Z}} \sim \Normal(\boldsymbol{0},\Sigma)$ being a Gaussian vector in $\R^n$
    independent of $I$.
    The covariance matrix $\Sigma$ defines the ellipsoid $E := \sqrt{\Sigma}(B_2^n)$.
    Let $r_1 > 0$ and consider the set
    \begin{equation*}
        \mathfrak{i} := \left\{i \in [M] \colon \wt{\vec{v}}_i \in r_1 E\right\} \subseteq [M] \,,
    \end{equation*}
    we write $K := \# \mathfrak{i}$.
    Assume
    \begin{equation*}
      \frac{K}{M} \geq \frac{1}{2}
      \quad\text{and}\quad
      M \geq C \cdot \left(r_1 + 3 \sqrt{n}\right)^n \,,
    \end{equation*}
    with $C > 0$ as in Lemma~\ref{lem:points-in-N01},
    and write
    $D_{I\mid\wt{\vec{Y}}}(\vec{y}) = \max_{i \in [M]} \P(I = i \mid \wt{\vec{Y}} = \vec{y})$.
    Then we have
    \begin{equation*}
        \P\left(D_{I\mid\wt{\vec{Y}}}(\wt{\vec{Y}}) \leq \frac{1}{2}\right)
            \geq \frac{1}{5} \,.
    \end{equation*}
\end{lemma}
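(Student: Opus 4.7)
The plan is to reduce the statement directly to Lemma~\ref{lem:points-in-N01} by a ``whitening'' change of variables that turns the general Gaussian noise $\wt{\vec{Z}} \sim \Normal(\boldsymbol{0},\Sigma)$ into a standard Gaussian. Concretely, I would first treat the nondegenerate case where $\Sigma$ is positive definite and $T := \sqrt{\Sigma}^{-1}$ is a well-defined invertible linear map on $\R^n$. Define
\begin{equation*}
  \vec{v}_i := T\wt{\vec{v}}_i \quad (i \in [M]),
  \qquad \vec{Z} := T\wt{\vec{Z}},
  \qquad \vec{Y} := T\wt{\vec{Y}} = \vec{v}_I + \vec{Z}.
\end{equation*}
By construction $\vec{Z} \sim \Normal(\boldsymbol{0},\id_n)$, still independent of $I$, so $(\vec{v}_i)_{i=1}^M$, $I$, and $\vec{Y}$ are exactly the type of data to which Lemma~\ref{lem:points-in-N01} applies.

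The key geometric observation is that $T$ sends the ellipsoid $E = \sqrt{\Sigma}(B_2^n)$ bijectively onto the Euclidean unit ball $B_2^n$, hence $r_1 E$ onto $r_1 B_2^n$. Therefore the index set and its cardinality do not change:
\begin{equation*}
  \{i \in [M] \colon \wt{\vec{v}}_i \in r_1 E\}
  = \{i \in [M] \colon \vec{v}_i \in r_1 B_2^n\} = \mathfrak{i},
  \qquad K = \#\mathfrak{i}.
\end{equation*}
The hypotheses $K/M \geq 1/2$ and $M \geq C(r_1 + 3\sqrt{n})^n$ thus carry over verbatim to the whitened problem.

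Next I would argue that conditional probabilities are preserved: since $T$ is a measurable bijection on $\R^n$, the sigma-algebras generated by $\wt{\vec{Y}}$ and by $\vec{Y} = T\wt{\vec{Y}}$ coincide, so for almost every $\wt{\vec{y}}$,
\begin{equation*}
  \P\bigl(I = i \mid \wt{\vec{Y}} = \wt{\vec{y}}\bigr)
  = \P\bigl(I = i \mid \vec{Y} = T\wt{\vec{y}}\bigr)
  \quad\text{for every } i \in [M],
\end{equation*}
and consequently $D_{I \mid \wt{\vec{Y}}}(\wt{\vec{y}}) = D_{I \mid \vec{Y}}(T\wt{\vec{y}})$. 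The events $\{D_{I \mid \wt{\vec{Y}}}(\wt{\vec{Y}}) \leq 1/2\}$ and $\{D_{I \mid \vec{Y}}(\vec{Y}) \leq 1/2\}$ therefore agree almost surely, and Lemma~\ref{lem:points-in-N01} gives the lower bound $\tfrac{1}{5}$.

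The only slightly delicate point, and the last step of the plan, is the degenerate case in which $\Sigma$ has nontrivial kernel. There the ellipsoid $E = \sqrt{\Sigma}(B_2^n)$, together with the set $r_1 E$, lives entirely in the range $V := \image(\sqrt{\Sigma})$, so all points of $\mathfrak{i}$ already lie in $V$. I would decompose $\R^n = V \oplus V^\perp$, observe that the $V^\perp$-component of $\wt{\vec{Y}}$ equals the $V^\perp$-component of $\wt{\vec{v}}_I$ deterministically (no noise there), and then run the same whitening argument inside $V$ using the restriction of $\sqrt{\Sigma}$, which is invertible there. Conditional on the $V^\perp$-information, one is reduced to a standard Gaussian problem on $V$ with an index subset whose ``good'' fraction is still at least $K/M \geq 1/2$, and the bound follows.

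I do not anticipate any serious obstacle: the proof is essentially a routine covariance-normalization argument, the only thing to check with care is that measurable bijectivity of $T$ lets one transfer the ``distinctness'' function and the target event from the original to the whitened problem without loss.
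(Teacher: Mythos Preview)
Your treatment of the invertible case is correct and coincides with the paper's argument.

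In the singular case, however, your sketch loses a factor of $2$ and therefore does not reach the constant $\tfrac{1}{5}$. The $V^\perp$-component of $\wt{\vec{Y}}$ equals $P_{V^\perp}\wt{\vec{v}}_I$, and since every $i \in \mathfrak{i}$ satisfies $\wt{\vec{v}}_i \in r_1 E \subset V$, all good indices lie in the single fibre $\mathfrak{i}' := \{i \in [M] : \wt{\vec{v}}_i \in V\}$. Your claim that after conditioning on the $V^\perp$-information ``the good fraction is still at least $K/M$'' therefore holds only on the event $\{I \in \mathfrak{i}'\}$, which has probability $M'/M$ with $M' := \#\mathfrak{i}'$; on the complementary event the conditioned problem has no good points at all. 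Applying Lemma~\ref{lem:points-in-N01} as a black box to the conditioned problem in $V$ (with $M'$ points, $K$ of them good, $K/M' \geq K/M \geq 1/2$) yields the conditional bound $\P\bigl(D_{I\mid\wt{\vec{Y}}}(\wt{\vec{Y}}) \leq \tfrac{1}{2} \,\big|\, I \in \mathfrak{i}'\bigr) \geq \tfrac{1}{5}$, but unconditionally you only get $(M'/M)\cdot \tfrac{1}{5} \geq \tfrac{1}{10}$.

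The paper recovers the full constant by re-opening the proof of Lemma~\ref{lem:points-in-N01} rather than invoking it as a black box. It first embeds the whitened $k$-dimensional conditioned problem back into $\R^n$ (padding the points with zeros and appending $n-k$ independent standard Gaussian coordinates, which leaves the posterior of $I$ unchanged), and then reruns the density estimate from the proof of Lemma~\ref{lem:points-in-N01} with $M'$ points but keeping the \emph{original} hypothesis $M \geq C(r_1 + 3\sqrt{n})^n$ in place of a bound on $M'$. Tracking the arithmetic, both terms in that estimate scale like $M/M'$, producing the sharper conditional bound $\P\bigl(D \leq \tfrac{1}{2} \,\big|\, I \in \mathfrak{i}'\bigr) \geq M/(5M')$. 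Multiplying by $\P(I \in \mathfrak{i}') = M'/M$ then gives exactly $\tfrac{1}{5}$.
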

\begin{proof}
  If $\Sigma$ is invertible, consider
  $\vec{Y} := \sqrt{\Sigma^{-1}} \wt{\vec{Y}} = \vec{v}_I + \vec{Z}$
  with $\vec{v}_i := \sqrt{\Sigma^{-1}} \wt{\vec{v}}_i$ and $\vec{Z} := \sqrt{\Sigma^{-1}} \wt{\vec{Z}} \sim \Normal(\vec{0}, \id_n)$.
  Since $\wt{\vec{v}}_i \in r_1 E$ is equivalent to $\vec{v}_i \in r_1 B_2^n$,
  we are precisely in the situation of Lemma~\ref{lem:points-in-N01}, the assertion follows.

  If the covariance matrix $\Sigma$ is singular with rank $k < n$,
  find an orthogonal matrix $Q$ such that $\Sigma = Q D Q^\top$ with
  $D = \diag(\sigma_1,\ldots,\sigma_k,0,\ldots,0)$ where $\sigma_1,\ldots,\sigma_k > 0$.
  We may consider $\widehat{\vec{Y}} := Q^\top \wt{\vec{Y}} = \widehat{\vec{v}}_I + \widehat{\vec{Z}}$
  with $\widehat{\vec{v}}_i := Q^\top \wt{\vec{v}}_i$
  and $\widehat{\vec{Z}} := Q^\top \wt{\vec{Z}} \sim \Normal(\vec{0},D)$,
  where $\wt{\vec{v}}_i \in r_1 E$ is equivalent to
  $\widehat{\vec{v}}_i \in r_1 \widehat{E}$ with
  $\widehat{E} := \sqrt{D} (B_2^n) \subseteq U := \linspan\{\vec{e}_1,\ldots,\vec{e}_k\}$,
  exploiting $Q^\top B_2^n = B_2^n$.
  If we observe $\widehat{\vec{Y}} \in U$
  then we know that $\widehat{\vec{v}}_I \in U$,
  that is, the random index $I$ satisfies
  \begin{equation*}
    I \in \mathfrak{i}'
      := \left\{i \in [M] \colon \widehat{\vec{v}}_i \in U\right\} \subseteq [M] \,.
  \end{equation*}
  The observation $\mathfrak{i} \subseteq \mathfrak{i}'$
  motivates us to consider the conditional setting given $I \in \mathfrak{i}'$.
  There we may resort to a $k$-dimensional standard Gaussian setting using the map
  $T \colon (y_1,\ldots,y_n) \mapsto \left(y_1/\sqrt{\sigma_1},\ldots,y_k/\sqrt{\sigma_k}\right)$,
  thus considering $k$-dimensional vectors
  $\vec{Y}' := T \widehat{\vec{Y}}$ with $\vec{v}_i' := T \widehat{\vec{v}}_i$
  and $\vec{Z}' := T\widehat{\vec{Z}} \sim \Normal(\vec{0},\id_k)$,
  where $\widehat{\vec{v}}_i \in r_1 \widehat{E}$ is equivalent to $\vec{v}_i' \in r_1 B_2^k$.
  This $k$-dimensional setting can be compared with an $n$-dimensional setting
  $\vec{Y} = \vec{v}_{I} + \vec{Z}$
  where $\vec{v}_{I} = (\vec{v}_I';0,\ldots,0) \in \R^n$
  and $\vec{Z} \sim \Normal(\vec{0},\id_n)$ with $Z_j = Z_j'$ for $j=1,\ldots,k$,
  so we can write $\vec{Y} = (\vec{Y'}, \vec{Y''})$.
  Obviously, if $\vec{v}_I' \in r_1 B_2^k$, then also $\vec{v}_I \in r_1 B_2^n$.
  Similarly to the proof of Lemma~\ref{lem:points-in-N01},
  the joint distribution of $(I,\vec{Y}')$ conditioned on the event $\{I \in \mathfrak{i}'\}$
  can be given by a family $\{q_i\}_{i \in \mathfrak{i}'}$ of shifted Gaussian functions
  on~$\R^k$ such that for $i \in \mathfrak{i}'$ we get
  \begin{equation*}
      \P\left(I = i \mid \vec{Y}' = \vec{y}', I \in \mathfrak{i}'\right)
      = \frac{q_i(\vec{y}')}{\sum_{j \in \mathfrak{i}'} q_j(\vec{y}')} \,,
  \end{equation*}
  and also for $(I,\vec{Y})$ there is a family $\{p_i\}_{i \in \mathfrak{i}'}$ of Gaussian functions on~$\R^n$ such that 
  \begin{equation*}
    \P\left(I = i \mid \vec{Y} = \vec{y}, I \in \mathfrak{i}'\right)
      = \frac{p_i(\vec{y})}{\sum_{j \in \mathfrak{i}'} p_j(\vec{y})} \,.
  \end{equation*}
  For $i \in \mathfrak{i}'$ we can factorize
  $p_i(\vec{Y}) = p_i(\vec{Y}',\vec{Y}'') = q_i(\vec{Y'})\,\gamma(\vec{Y''})$
  where $\gamma$ is the density of the $(n-k)$-dimensional standard Gaussian distribution, thus
  \begin{equation*}
      \frac{p_i(\vec{Y})}{\sum_{j \in \mathfrak{i}'} p_j(\vec{Y})}
      =
      \frac{q_i(\vec{Y'}) \, \gamma(\vec{Y''})}{\sum_{j \in \mathfrak{i}'} q_j(\vec{Y'})\,\gamma(\vec{Y''})}
      =
       \frac{q_i(\vec{Y'})}{\sum_{j \in \mathfrak{i}'} q_j(\vec{Y}')} \,.
  \end{equation*}
  With 
  \begin{equation*}
    D_{I|\vec{Y}}'(\vec{y}) := \max_{i \in \mathfrak{i}'} \frac{p_i(\vec{y})}{\sum_{j \in \mathfrak{i}'} p_j(\vec{y})}
  \end{equation*}
  and $M' := \#\mathfrak{i}'$,
  we can hence write
  \begin{align*}
    \P\left(D_{I\mid\wt{\vec{Y}}}(\wt{\vec{Y}}) \leq \frac{1}{2}\right)
      &\geq \P(I \in \mathfrak{i}') 
                \cdot \P\left(D_{I\mid\wt{\vec{Y}}}(\wt{\vec{Y}}) \leq \frac{1}{2}
                            \;\middle|\; I \in \mathfrak{i}' \right) \\
      &= \frac{M'}{M} 
                \cdot \P\left(D_{I|\vec{Y}}'(\vec{Y}) \leq \frac{1}{2}
                            \;\middle|\; I \in \mathfrak{i}' \right)\,.
  \end{align*}
  We use Lemma~\ref{lem:points-in-N01}
  for $\vec{Y}$ conditioned on $\{I \in \mathfrak{i}'\}$
  where $M'$ takes the role of $M$,
  with the small modification that we assume
  $\frac{K}{M'} \geq \frac{M}{2M'}$,
  and $M \geq C \cdot \left(r_1 + 3\sqrt{n}\right)^n$
  instead of a bound on $M'$.
  That way we obtain
  \begin{equation*}
    \P\left(D_{I|\vec{Y}}'(\vec{Y}) \leq \frac{1}{2}
    \;\middle|\; I \in \mathfrak{i}' \right)
      \geq \frac{M}{5M'} \,.
  \end{equation*}
  Combined with the previous estimate, this finishes the proof.
\end{proof}

In our main result, Theorem~\ref{thm:l1->loo,non-ada},
we will consider a collection $(\vec{u}_i)_{i\in [m]}$ of points $\vec{u}_i = \frac{2}{3} \vec{e}_i$
on the coordinate axis, $M = m$.
Thus, given a measurement matrix $N \in \R^{n \times m}$,
we have the images $\vec{v}_i := N\vec{u}_i = \frac{2}{3} \vec{c}_i$
where $\vec{c}_1,\ldots,\vec{c}_m \in \R^n$ denote the colums of $N$.
Crucially, however, we only measure $\vec{Y}_i := N\vec{X}_i = N\vec{u}_i + \sigma N\vec{W}$.
In the following we will answer how badly the columns are separated relative to the noise generated by $N\vec{W}$.

\begin{lemma} \label{lem:cj-in-EJ}
  Let $N \in \R^{n\times m}$ be a matrix with columns $\vec{c}_1,\ldots,\vec{c}_m \in \R^n$.
  For an index set $\mathfrak{j} \subset [m]$
  let $P_{\mathfrak{j}}$ be the projection onto the coordinates $j \in \mathfrak{j}$. 
  Using this, write $\Sigma_{\mathfrak{j}} := N P_{\mathfrak{j}} N^{\top}$
  and define the ellipsoid
  $E_{\mathfrak{j}} := \sqrt{\Sigma_{\mathfrak{j}}}(B_2^n)$.
  For $2n < m$ consider the set $\mathcal{J}_{2n}$ as in \eqref{eq:setsJn'} and
  and let $\mathfrak{J} \sim \Uniform{\mathcal{J}_{2n}}$. Then
  \begin{equation*}
    \P\left(\#\left\{j \in [m] \colon
                     \vec{c}_j \in 2^n E_{\mathfrak{J}}
              \right\}
                \geq \frac{m}{2}
      \right) \geq \frac{1}{2} \,.
  \end{equation*}
\end{lemma}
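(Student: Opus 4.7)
The natural route is a first-moment / Markov reduction combined with leverage-score identities. Writing $N(\mathfrak{J}) := \#\{j \in [m]: \vec{c}_j \in 2^n E_\mathfrak{J}\}$, Markov applied to the complement yields
\begin{equation*}
  \P\bigl(N(\mathfrak{J}) \geq m/2\bigr) \geq 1 - \frac{2\expect[m - N(\mathfrak{J})]}{m} \,,
\end{equation*}
so it suffices to establish $\expect[m - N(\mathfrak{J})] \leq m/4$. For $j \in \mathfrak{J}$ one has $\vec{c}_j \in E_\mathfrak{J}$ for free from $\vec{c}_j^\top \Sigma_\mathfrak{J}^\dagger \vec{c}_j \leq 1$ (these are the diagonal entries of the orthogonal projection $N_\mathfrak{J}^\top(N_\mathfrak{J} N_\mathfrak{J}^\top)^\dagger N_\mathfrak{J}$), so only indices $j \notin \mathfrak{J}$ can contribute to the bad count.

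For $j \notin \mathfrak{J}$, the matrix-determinant lemma $\det\Sigma_{\mathfrak{J}\cup\{j\}} = \det \Sigma_\mathfrak{J}\,(1 + \vec{c}_j^\top \Sigma_\mathfrak{J}^{-1} \vec{c}_j)$ turns the event $\vec{c}_j \in 2^n E_\mathfrak{J}$ into the condition $\vec{c}_j^\top \Sigma_{\mathfrak{J}'}^{-1} \vec{c}_j \leq 4^n/(1+4^n)$, where $\mathfrak{J}' := \mathfrak{J} \cup \{j\}$. The pairing trick I would use here is that if $\mathfrak{J}' \sim \Uniform\binom{[m]}{2n+1}$ and, conditionally on $\mathfrak{J}'$, $j \sim \Uniform \mathfrak{J}'$, then $(\mathfrak{J}'\setminus\{j\},\, j)$ has the same joint law as $(\mathfrak{J},\, j)$ with $\mathfrak{J}$ uniform on $\binom{[m]}{2n}$ and $j$ uniform on $[m]\setminus \mathfrak{J}$. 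Conditionally on $\mathfrak{J}'$ having full rank, the trace identity $\sum_{j \in \mathfrak{J}'}\vec{c}_j^\top \Sigma_{\mathfrak{J}'}^{-1} \vec{c}_j = n$ combined with $\vec{c}_j^\top \Sigma_{\mathfrak{J}'}^{-1}\vec{c}_j \in [0,1]$ controls the number of leverage-heavy indices in $\mathfrak{J}'$.

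The hard part will be squeezing out the constant $m/4$ rather than $m/2$. The plain Markov bound on leverage scores only produces a heavy fraction of at most $n(1+4^{-n})/(2n+1)$, so after integration one gets $\expect[m-N(\mathfrak{J})] \lesssim m/2$, which is not enough. To close this gap I would either (i) refine the counting by working with the dual quantity $1 - \vec{c}_j^\top \Sigma_{\mathfrak{J}'}^{-1} \vec{c}_j$, whose sum over $j \in \mathfrak{J}'$ equals $n+1$ and whose relevant threshold $1/(1+4^n)$ is exponentially small, while exploiting that the $2n$ automatically good indices in $\mathfrak{J}$ supply a deterministic surplus; or (ii) bypass Markov altogether and argue directly that with probability at least $\tfrac12$, $\mathfrak{J}$ contains a near-maximum-determinant basis. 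For the latter one starts from $T^* \in \argmax_{|T|=n}|\det N_T|$, for which Cramer's rule gives $\vec{c}_j \in \sqrt{n}\, E_{T^*}$ for every $j$, and uses a Gram–Schmidt swap argument to compare $E_{T^*}$ with $E_\mathfrak{J}$ at a cost of the factor $2^n/\sqrt{n}$.
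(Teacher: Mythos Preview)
Your proposal is not a complete proof, and the gaps you flag yourself are real and not closed along the lines you suggest.

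The leverage-score/Markov route indeed stalls at $\expect[m - N(\mathfrak{J})] \lesssim m/2$, and your fix~(i) does not improve this: the dual quantities $1 - \vec{c}_j^\top \Sigma_{\mathfrak{J}'}^{-1} \vec{c}_j$ sum to $n+1$ over $\mathfrak{J}'$ and lie in $[0,1]$, so the number of indices with dual value below $1/(1+4^n)$ is bounded by $n/(1-\tfrac{1}{1+4^n}) = n(1+4^{-n})$ --- exactly the bound you already had from the primal side. The ``deterministic surplus'' of $2n$ automatically good indices inside $\mathfrak{J}$ is negligible once $m \gg n$. Fix~(ii) has a more serious gap: the global max-determinant set $T^*$ is a fixed $n$-set, and for $m \gg n$ a uniform random $\mathfrak{J}$ of size $2n$ will typically miss $T^*$ entirely, so there is no reason to expect $E_{T^*} \subseteq C\, E_\mathfrak{J}$; a ``Gram--Schmidt swap'' between $T^*$ and $\mathfrak{J}$ cannot be carried out without columns in common.

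The paper's argument is closer in spirit to~(ii) but avoids any global $T^*$. For fixed $\mathfrak{j}$ it runs a greedy orthogonalisation \emph{inside} $\mathfrak{j}$: at step $i$ it sorts all remaining columns by residual norm $\|\Pi_{i-1}\vec{c}_j\|_2$ and selects the $\mathfrak{j}$-column of largest such norm, recording its rank $k_i$ in that sorted list. The selected columns $\vec{b}_1,\ldots,\vec{b}_n$ give $E_{\mathfrak{j}_n} \subseteq E_\mathfrak{j}$ by monotonicity, and an induction shows that the rectangle $R_n = \{\sum_\ell \alpha_\ell \Pi_{\ell-1}\vec{b}_\ell : |\alpha_\ell|\leq 1\}$ satisfies $R_n \subseteq 2^n E_{\mathfrak{j}_n}$; by construction every column ranked below $k_n$ lies in $R_n$. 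The probabilistic part is then purely combinatorial: when $\mathfrak{J}$ is uniform, the ranks $K_1 > \cdots > K_n$ have the same law as the top $n$ order statistics of a uniform $2n$-subset of $[m]$, and a symmetry argument gives $\P(K_n \geq \lceil m/2\rceil) \geq \tfrac12$. The key idea you are missing is to build the good basis from within the random set $\mathfrak{J}$ itself, so that the ellipsoid comparison is trivial and the randomness enters only through order statistics.
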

\begin{proof}
  The proof goes in two stages. In the first stage we fix one
  $\mathfrak{j} \in \mathcal{J}_{2n}$.
  Our aim is to construct a sufficiently large set $\wt{\mathfrak{m}}$
  such that
  \begin{equation} \label{eq:claim-cj-in-2nEj}
    j \in \wt{\mathfrak{m}}
    \quad\Longrightarrow\quad
    \vec{c}_j \in 2^n E_{\mathfrak{j}} \,.
  \end{equation}
  In the course of our construction we iteratively identify
  $n$ most important columns of $N$, described by an index set
  $\mathfrak{j}_n = \{j_1,\ldots,j_n\} \subset \mathfrak{j}$, 
  such that the ellipsoid $E_{\mathfrak{j}_n}$ generated by these $n$~columns
  is contained in $E_{\mathfrak{j}}$.
  The elements of $\wt{\mathfrak{m}}$
  are identified by a shrinkage procedure with a nested sequence of candidate sets,
  \begin{equation*} 
      [m] = \mathfrak{m}_0 \supset \ldots \supset \mathfrak{m}_n \,,
  \end{equation*}
  where for the selected important coordinates $j_1,\ldots,j_n$ we have
  $j_i \in \mathfrak{m}_{i-1} \setminus \mathfrak{m}_i$,
  and finally
  $\wt{\mathfrak{m}} 
  = \{j_1,\ldots,j_n\} \cup \mathfrak{m}_n$.
  The geometric idea behind the shrinkage is
  that after selecting $j_1,\ldots,j_i$
  we define a rectangular set
  $R_i \subset U_i := \linspan\{\vec{c}_{j_1},\ldots,\vec{c}_{j_i}\}$
  such that for $j \in \{j_1,\ldots,j_i\} \cup \mathfrak{m}_i$
  we have $(\id_n - \Pi_i) \vec{c}_j \in R_i$
  where $(\id_n - \Pi_i)$ is the orthogonal projection onto~$U_i \subseteq \R^n$.
  In the end, for $j \in \wt{\mathfrak{m}}$
  our construction guarantees $\vec{c}_j \in R_n$,
  and we also have
  $R_n \subseteq 2^n E_{\mathfrak{j}_n} \subseteq 2^n E_{\mathfrak{j}}$.
  What we show here is therefore actually stronger than
  \eqref{eq:claim-cj-in-2nEj}. 
  
  The variables $k_i = \#\left(\{j_i\} \cup \mathfrak{m}_i\right)$
  keep track of how much the candidate set has shrunk so far.
  In particular, the distribution of $k_n$
  regarded as a random variable $K_n$, depending on a random index set $\mathfrak{J} \sim \Uniform{\mathcal{J}_{2n}}$ 
  instead of a fixed set $\mathfrak{j}$,
  will help us to understand the typical size of the set
  $\wt{\mathfrak{m}}$, namely, $\#\wt{\mathfrak{m}} = k_n + n - 1$.

  Let us now begin with the first stage of the proof
  and fix $\mathfrak{j} \in \mathcal{J}_{2n}$.
  We first describe the construction of $\wt{\mathfrak{m}}$.
  The iteration is initialized by putting
  $\mathfrak{m}_0 := [m]$ with $m_0 := \#\mathfrak{m}_0 = m$,
  and $\Pi_0 := \id_n$.
  For $i = 0,\ldots,n-1$, proceed as follows:
  
  \begin{enumerate}
    \item Sort the elements $j \in \mathfrak{m}_i$
      according to the standard Euclidean norm of $\Pi_i \vec{c}_j$, that is,
      find a bijection $\pi_i \colon [m_i] \to \mathfrak{m}_{i}$ such that
      \begin{equation*}
        \|\Pi_i \vec{c}_{\pi_i(1)}\|_2 \leq \ldots \leq \|\Pi_i \vec{c}_{\pi_i(m_i)}\|_2 \,.
      \end{equation*}
    \item Find the index $j_{i+1}$ from $\mathfrak{j}$ with the largest such norm,
      \begin{align*}
        k_{i+1} &:= \max\{\ell \in [m_i] \colon \pi_i(\ell) \in \mathfrak{j}\}, \\
        j_{i+1} &:= \pi_i(k_{i+1}) \in \mathfrak{j} \cap \mathfrak{m}_i \,, \nonumber \\
        \vec{b}_{i+1} &:= \vec{c}_{j_{i+1}} \,. \nonumber
      \end{align*}
    \item Define
      \begin{align*}
        \mathfrak{m}_{i+1} &:= \left\{\pi_i(\ell) \colon \ell < k_{i+1}\right\}
          \subset \mathfrak{m}_i, \\
        m_{i+1} &:= \#\mathfrak{m}_{i+1} = k_{i+1} - 1 \,,
      \end{align*}
      and let $\Pi_{i+1}$ be the orthogonal projection onto 
      $(\linspan\{\vec{b}_1,\ldots,\vec{b}_{i+1}\})^\bot$.
  \end{enumerate}
  Finally, we put
  \begin{equation} \label{eq:count-wtj=mn+n}
    \mathfrak{j}_i := \{j_1,\ldots,j_i\}, \quad
    \wt{\mathfrak{m}} := \mathfrak{j}_n \cup \mathfrak{m}_n, \quad
    \wt{m} := \#\wt{\mathfrak{m}} = m_n + n = k_n + n - 1 \,.
  \end{equation}
  For this construction, we claim that \eqref{eq:claim-cj-in-2nEj} holds.

  The left singular vectors of $N P_{\mathfrak{j}}$
  are eigenvectors of
  $\Sigma_{\mathfrak{j}} = N P_{\mathfrak{j}} N^{\top}$
  and form a basis of $\R^n$,
  the respective singular values of the former
  being the square roots of the eigenvalues of the latter.
  Further, for $\mathfrak{j}' \subset \mathfrak{j} \subseteq [m]$,
  the coordinate projection $P_{\mathfrak{j}'}$ is just a restriction of $P_{\mathfrak{j}}$.
  We thus have the following inclusion:
  \begin{equation*}
    E_{\mathfrak{j}'}
      = \sqrt{\Sigma_{\mathfrak{j}'}} \left(B_2^n\right)
      = N P_{\mathfrak{j}'} \left(B_2^m\right)
      \subseteq N P_{\mathfrak{j}} \left(B_2^m\right)
      = \sqrt{\Sigma_{\mathfrak{j}}} \left(B_2^n\right)
      = E_{\mathfrak{j}} \,.
  \end{equation*}
  In particular, for $\mathfrak{j} \in \mathcal{J}_{2n}$
  and its accordingly defined subsets $\mathfrak{j}_i$, see above, 
  \begin{equation*}
    E_{\mathfrak{j}_1} \subseteq \ldots \subseteq E_{\mathfrak{j}_n} \subseteq E_{\mathfrak{j}} \,.
  \end{equation*}
  We can compare these ellipsoids to a chain of rectangles $R_1 \subseteq \ldots \subseteq R_n$,
  \begin{equation*}
    R_i := \left\{\alpha_1 \Pi_0\vec{b}_1 + \ldots + \alpha_i \Pi_{i-1} \vec{b}_i \colon
                  \alpha_1,\ldots,\alpha_i \in [-1,1]
           \right\}
      \subset \R^n \,.
  \end{equation*}
  As an intermediate result we aim to show
  \begin{equation} \label{eq:cj-in-R}
    j \in \wt{\mathfrak{m}} = \mathfrak{j}_n \cup \mathfrak{m}_n
    \quad\Longrightarrow\quad
    \vec{c}_j \in R_n \,.
  \end{equation}
   Clearly, the sets $R_i$ and $E_{\mathfrak{j}_i}$ lie in the space
   \begin{equation*}
    U_i := \linspan\{\vec{b}_1,\ldots,\vec{b}_i\}
      = \linspan\{\Pi_0\vec{b}_1,\ldots,\Pi_{i-1}\vec{b}_i\} 
    \,.
  \end{equation*}
  Define
    \begin{equation*}
    n_0 := \min\left(\{n\} \cup \left\{i \in [n-1] \colon \|\Pi_{i} \vec{b}_{i+1}\| = 0 \right\}\right),
  \end{equation*}
  and note that $U_{n_0} = U_n$ and $n_0 = \dim(U_n)$.
  By construction, for all $j \in \mathfrak{m}_i$, $i=1,\ldots,n$, we have
  \begin{equation*}
    |\langle \Pi_{i-1}\vec{b}_i , \vec{c}_j \rangle|
      = |\langle \Pi_{i-1}\vec{b}_i , \Pi_{i-1}\vec{c}_j \rangle|
      \leq \|\Pi_{i-1}\vec{b}_i\| \cdot \|\Pi_{i-1} \vec{c}_j\|
      \leq \|\Pi_{i-1}\vec{b}_i\|^2 \,.
  \end{equation*}
  Hence, using the orthogonal basis $(\Pi_0\vec{b}_1,\ldots,\Pi_{n_0-1}\vec{b}_{n_0})$ of $U_n$, we can write
  \begin{equation*}
    \vec{c}_j
      = \sum_{i=1}^{n_0}
          \underbrace{\frac{\langle \Pi_{i-1}\vec{b}_i , \vec{c}_j \rangle
                          }{\|\Pi_{i-1}\vec{b}_i\|^2}
                      }_{\in [-1,1] \text{ if } j \in \mathfrak{m}_i} 
            \Pi_{i-1}\vec{b}_i \,.
  \end{equation*}
  This shows $\vec{c}_j \in R_{n_0} = R_n$ for $j \in \mathfrak{m}_n \subset \ldots \subset \mathfrak{m}_1$.
  Similarly, $\vec{b}_{i} = \vec{c}_{j_i} \in U_i$, where $j_i \in \mathfrak{m}_{i-1}$, can be given in the form
  \begin{equation} \label{eq:bi-decomp}
    \vec{b}_i
      = \sum_{\ell=1}^{\min\{n_0,i\}}
          \underbrace{\frac{\langle \Pi_{\ell-1}\vec{b}_\ell , \vec{b}_i \rangle
                          }{\|\Pi_{\ell-1}\vec{b}_\ell\|^2}
                      }_{\in [-1,1]} 
            \Pi_{\ell-1}\vec{b}_\ell \,,
  \end{equation}
  implying $\vec{b}_i \in R_i \subseteq R_n$.
  We thus arrive at the intermediate result~\eqref{eq:cj-in-R}.
  
  To prove \eqref{eq:claim-cj-in-2nEj}, it remains to show
  \begin{equation*}
    R_n \subseteq 2^n \, E_{\mathfrak{j}} \,.
  \end{equation*}
  This can be done by inductively proving
  \begin{multline} \label{eq:Ri-inclusion-Ei}
    R_i
      = \left\{\sum_{\ell=1}^{i} \alpha_\ell\Pi_{\ell-1}\vec{b}_\ell \colon
                \max_{\ell \in [i]} |\alpha_\ell| \leq 1 \right\} \\
      \subseteq {\textstyle \sqrt{\frac{4^i - 1}{3}}} \, E_{\mathfrak{j}_i}
      = \left\{\sum_{\ell=1}^{i} z_\ell \vec{b}_\ell \colon 
                \sum_{\ell=1}^{i} z_\ell^2 \leq \frac{4^i - 1}{3} \right\} \,.
  \end{multline}
  For $i=1$, by $\Pi_0 \vec{b}_1 = \vec{b}_1$ and $\frac{4^1 - 1}{3} = 1$,
  the assertion $R_1 \subseteq E_{\mathfrak{j}_1}$ holds with equality.
  For $i=2,\ldots,n$, from \eqref{eq:bi-decomp} we know
  \begin{equation*}
    \vec{b}_i - \Pi_{i-1} \vec{b}_i
      = \sum_{\ell=1}^{\min\{n_0,i-1,n-1\}}
          \underbrace{\frac{\langle \Pi_{\ell-1}\vec{b}_\ell , \vec{b}_i \rangle
                          }{\|\Pi_{\ell-1}\vec{b}_\ell\|^2}
                      }_{\in [-1,1]} 
            \Pi_{\ell-1}\vec{b}_\ell
      \in R_{i-1} \,.
  \end{equation*}
  Hence, an element $\vec{y} \in R_i$ may be written
  with $\vec{y}' \in R_{i-1}$ and $|\alpha_{\ell}|\leq 1$ in the form
  \begin{equation*}
    \vec{y} = \vec{y}' + \alpha_{\ell} \Pi_{i-1} \vec{b}_i
            = \underbrace{\bigl(\vec{y}'
                          - \alpha_{\ell} \, (\vec{b}_i - \Pi_{i-1} \vec{b}_i)\bigr)
                    }_{\in 2 R_{i-1}}
                + \, \alpha_{\ell}\vec{b}_i \,.
  \end{equation*}
  Using the induction assumption
  $2R_{i-1} \subseteq 2\sqrt{\frac{4^{i-1}-1}{3}} \, E_{\mathfrak{j}_{i-1}}$, there is a representation
  \begin{equation*}
    \vec{y} = \sum_{\ell=1}^{i-1} z_{\ell} \vec{b}_\ell + \alpha_i \vec{b}_i
    \quad\text{with}\quad
    \sum_{\ell=1}^{i-1} z_{\ell}^2 + \alpha_i^2
      \leq 4 \cdot \frac{4^{i-1} - 1}{3} + 1
      = \frac{4^i - 1}{3} \,.
  \end{equation*}
  This proves \eqref{eq:Ri-inclusion-Ei}.
  Before we proceed with the second part of the proof,
  note that for all stages $i=1,\ldots,n$ of the construction,
  we have $\#(\mathfrak{m}_i \cap \mathfrak{j}) = 2n-i$.

  For the second part of the proof, instead of a fixed set $\mathfrak{j}$,
  let $\mathfrak{J}$ be drawn uniformly from $\mathcal{J}_{2n}$.
  The construction given in the first part of the proof is now to be understood as random,
  in particular the quantities in~\eqref{eq:count-wtj=mn+n},
  most prominently $\wt{M} = K_n + n - 1$
  which we write with a capital letter to stress the randomness.
  By the geometric considerations above, this random variable $\wt{M}$ serves as a lower bound for
  the quantity of interest:
  \begin{equation*}
    \#\left\{j \in [m] \colon \vec{c}_j \in 2^n E_{\mathfrak{J}} \right\}
        \geq \wt{M} \,.
  \end{equation*}
  
  If the set $\mathfrak{J}$ is random, the value $k_{i+1}$ in step~2 of the construction
  is random as well, so we write $K_{i+1}$ instead to indicate that this is now a random variable.
  In order to understand the distribution of $K_{i+1}$,
  let us first write the random set $\mathfrak{J} = \{K_1',\ldots,K_{2n}'\}$
  with random elements in decaying order $K_1' > \ldots > K_{2n}'$.
  Obviously, $K_1$ and $K_1'$ have the same distribution.
  Further, let $i \in [n-1]$, fix $m-i+1 \geq k_i > k_{i+1} \geq 2n-i$, and, by counting all remaining possibilities for $\mathfrak{J}$, observe that
  \begin{equation*}
    \P(K_{i+1}' = k_{i+1} \mid K_i' = k_i)
      = \P(K_{i+1} = k_{i+1} \mid K_i = k_i) \,.
  \end{equation*}
  In other words,
  the distribution of $K_{i+1}$ does not depend on the particular set $\mathfrak{j}_i$ and the particular ordering $\pi_i$
  the construction would yield for a given realization of~$\mathfrak{J}$.
  Hence, $K_i$ and~$K_i'$ have the same distribution for $i=1,\ldots,n$.
  
  For the special case of even $m = 2k$,
  we can split $[m]$ into two parts $[k]$ and $[m]\setminus[k]$ of equal cardinality.
  By symmetry of the distribution of $\mathfrak{J}$,
  \begin{align*}
      \P(K_n' \geq k+1)
      &= \P\bigl( \#(\mathfrak{J} \cap [m] \setminus [k]) \geq n \bigr) \\
      &= \P\bigl( \#(\mathfrak{J} \cap [k]) \geq n \bigr) \\
      &= \P(K_{n+1}' \leq k) \\
      &= 1 - \P(K_{n+1}' \geq k+1) \\
    [K_{n+1}'<K_n']\qquad
      &\geq 1 - \P(K_n' \geq k+1) \,,
  \end{align*}
  therefore
  \begin{equation*}
    \P(K_n \geq k+1) = \P(K_n' \geq k+1) \geq \frac{1}{2} \,,
  \end{equation*}
  which by $\wt{M} \geq K_n$ shows the assertion.
  For odd $m = 2k+1$ the calculation is similar except that one equality becomes the inequality
  \begin{equation*}
      \P\bigl( \#(\mathfrak{J} \cap [m] \setminus [k]) \geq n \bigr) 
      \geq \P\bigl( \#(\mathfrak{J} \cap [k]) \geq n \bigr) \,,
  \end{equation*}
  further, $k+1 = \lceil\frac{m}{2}\rceil > \frac{m}{2}$ in this case.
\end{proof}

\subsection{Main results for non-adaptive Monte Carlo methods}

The following theorem is the main result of this paper.

\begin{theorem} \label{thm:l1->loo,non-ada}
  For $m \geq C \cdot e^{an^2}$ with suitable constants $C,a > 0$, we have
  \begin{equation*}
    e^{\ran,\nonada}(n,\ell_1^m \embed \ell_\infty^m)
      \geq \frac{1}{3} \left(\frac{1}{20} - e^{-4}\right) =: \eps_0 \approx 0.010561 > 0 
    \,.
  \end{equation*}
  Hence, for $\eps \in (0,\eps_0)$ we have the complexity
  \begin{equation*}
    n^{\ran,\nonada}(\eps, \ell_1^m \embed \ell_\infty^m)
      \geq \sqrt{\frac{\log \frac{m}{C}}{a}} \,.
  \end{equation*}
\end{theorem}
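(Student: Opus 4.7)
The plan is to invoke the average-case machinery from Section~\ref{sec:bakhvalov} with the ``spike'' family $\vec{u}_i := \tfrac{2}{3} \vec{e}_i$ for $i \in [m]$ (so $M = m$) and the Gaussian disturbance $\vec{W} = P_\mathfrak{J} \vec{Z}$ from \eqref{eq:X=u+sW}. With $r_0 := 1/6$, each $F_i := \vec{u}_i + r_0 B_1^m$ sits inside $B_1^m$ since $\|\vec{u}_i\|_1 + r_0 = 5/6 < 1$, and for $i \neq i'$ we have $\dist_\infty(F_i, F_{i'}) \geq \tfrac{2}{3} - 2 r_0 = \tfrac{1}{3} =: c$, because shrinking by $r_0$ in the $\ell_1$-sense shrinks by at most $r_0$ in $\ell_\infty$. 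Calibrating the noise is the first subtle step: conditional on $\mathfrak{J}$, the vector $\vec{W}$ is a $2n$-dimensional standard Gaussian, so the concentration bound $\P(\|\vec{W}\|_2 > 3\sqrt{2n}) \leq e^{-4n}$ together with $\|\cdot\|_1 \leq \sqrt{2n}\|\cdot\|_2$ on that coordinate subspace yields $\|\vec{W}\|_1 \leq 6n$ off an event of probability at most $e^{-4n}$. Choosing $\sigma := 1/(36n)$ therefore makes $\Omega' = \{\|\vec{W}\|_1 \leq r_0/\sigma\}$ have complement probability $\delta := \P(\Omega'^c) \leq e^{-4n} \leq e^{-4}$.

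Fix an arbitrary non-adaptive measurement matrix $N \in \R^{n \times m}$ with columns $\vec{c}_1,\ldots,\vec{c}_m$, giving information $\vec{Y} := N\vec{X} = \tfrac{2}{3} \vec{c}_I + \sigma N P_\mathfrak{J} \vec{Z}$; conditioned on $\mathfrak{J} = \mathfrak{j}$ the noise is $\Normal(\vec{0}, \sigma^2 \Sigma_\mathfrak{j})$ with associated ellipsoid $\sigma E_\mathfrak{j}$. By Lemma~\ref{lem:monotonInfo}, augmenting the information to $(\vec{Y}, \mathfrak{J})$ can only decrease the Bayes error, and it reduces the analysis to a conditional problem where Lemma~\ref{lem:points-in-gauss} is directly applicable. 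By Lemma~\ref{lem:cj-in-EJ}, with probability at least $\tfrac{1}{2}$ the realisation $\mathfrak{J} = \mathfrak{j}$ is \emph{good} in the sense that at least $m/2$ of the columns lie in $2^n E_\mathfrak{j}$, equivalently at least $m/2$ of the shifts $\tfrac{2}{3}\vec{c}_j$ lie in $r_1 \cdot \sigma E_\mathfrak{j}$ for $r_1 := \tfrac{2 \cdot 2^n}{3\sigma} = 24 n \cdot 2^n$. For any such good $\mathfrak{j}$, Lemma~\ref{lem:points-in-gauss} then gives $\P(D_{I\mid (\vec{Y},\mathfrak{J})}(\vec{Y},\mathfrak{j}) \leq \tfrac{1}{2} \mid \mathfrak{J} = \mathfrak{j}) \geq \tfrac{1}{5}$, provided $m \geq C\bigl(r_1 + 3\sqrt{n}\bigr)^n$. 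Averaging over $\mathfrak{J}$ yields the unconditional bound $\P(D_{I\mid (\vec{Y},\mathfrak{J})}(\vec{Y},\mathfrak{J}) \leq \tfrac{1}{2}) \geq \tfrac{1}{10}$.

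Plugging this and $\delta \leq e^{-4}$ into Lemma~\ref{lem:truncation} (with information $(\vec{Y}, \mathfrak{J})$) gives
\begin{equation*}
\expect \bigl[\|A_n(\vec{X}) - \vec{X}\|_\infty \cdot \ind_{\Omega'}\bigr]
  \geq c \cdot \Bigl(\tfrac{1}{2} \cdot \tfrac{1}{10} - e^{-4}\Bigr)
  = \tfrac{1}{3}\Bigl(\tfrac{1}{20} - e^{-4}\Bigr) = \eps_0 \,,
\end{equation*}
which by Bakhvalov's inequality \eqref{eq:Bkahvalov} transfers to the desired lower bound on $e^{\ran,\nonada}(n, \ell_1^m \embed \ell_\infty^m)$. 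The complexity statement is the contrapositive: the sufficient condition on $m$ reduces to $\log m \geq \log C + n \log(24n) + n^2 \log 2$, so absorbing the sub-quadratic terms into a slight inflation of the constant $a > \log 2$ gives $m \geq C e^{a n^2}$ and hence $n \geq \sqrt{\log(m/C)/a}$.

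The central technical obstacle is the balance in choosing $\sigma$: the $2n$-dimensional projection step used in the definition of $\vec{W}$ is precisely what keeps $\sigma$ from collapsing at the rate $1/\sqrt{m}$ that an $m$-dimensional $\ell_1$-truncated Gaussian would force. This is what enables the ellipsoid-separation requirement $m \geq C(r_1 + 3\sqrt{n})^n$ to be solvable with $m$ merely \emph{doubly-exponential} in~$n$, giving the genuine $\sqrt{\log m}$ complexity bound rather than a weaker $\log m$-type result. Everything else is a careful but essentially routine assembly of the three conditional ingredients (Lemmas~\ref{lem:points-in-N01}--\ref{lem:cj-in-EJ}) around Bakhvalov's technique.
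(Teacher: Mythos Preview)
Your proof is correct and follows essentially the same route as the paper: the same spike family $\vec{u}_i=\tfrac{2}{3}\vec{e}_i$, the same augmentation by $\mathfrak{J}$ via Lemma~\ref{lem:monotonInfo}, and the same chaining of Lemmas~\ref{lem:cj-in-EJ}, \ref{lem:points-in-gauss}, and \ref{lem:truncation}. Your parameters differ cosmetically (the paper takes $r_0=\tfrac{1}{3}$, $\sigma=\tfrac{1}{18n}$, hence $r_1=12n\cdot 2^n$, while you take $r_0=\tfrac{1}{6}$, $\sigma=\tfrac{1}{36n}$, $r_1=24n\cdot 2^n$), but the ratio $r_0/\sigma=6n$ and the separation $c=\tfrac{1}{3}$ coincide, so the final constant $\eps_0$ matches exactly and only the constant~$a$ is affected.

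Two small slips in your closing commentary: the bound $m\geq C(r_1+3\sqrt{n})^n$ gives $m$ of order $e^{\Theta(n^2)}$, which is not ``doubly-exponential'' in $n$; and a complexity lower bound of order $\log m$ would be \emph{stronger}, not weaker, than the $\sqrt{\log m}$ you obtain (indeed the paper conjectures $\log m$ is the truth).
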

\begin{proof}
  We work with Bakhvalov's technique for the setup of measures $\wt{\mu}$ and $\mu$
  as described in generality in Section~\ref{sec:bakhvalov}
  where $\wt{\mu}$ is the distribution of a random vector $\vec{X} = \vec{u}_I + \sigma \vec{W}$. Here now we consider a simple setting with $M = m$ points $\vec{u}_i = \frac{2}{3}\vec{e}_i$,
  each containing only one non-zero coordinate,
  a random index $I \sim \Uniform[m]$,
  and a Gaussian disturbance $\vec{W}$ into $2n$ random directions.
  The scaling parameter is chosen to be $\sigma := \frac{1}{18n}$, and the truncated measure $\mu$ is supported on the $\ell_1$-balls $F_i$ of radius $r_0 = \frac{1}{3}$ around the points $\vec{u}_i$.
  As before, we truncate with the event $\Omega'$, see~\eqref{eq:Omega'}, and
  via~\eqref{eq:|Z|>3m} applied to the $2n$-dimensional Gaussian distribution
  this yields a truncation probability 
  \begin{equation*}
    \delta = \P(\Omega'^c) = \P\left(\|\vec{W}\|_1 > \frac{r_0}{\sigma} \right)
        < e^{-4n} \leq e^{-4} \approx 0.018316 \,,
  \end{equation*}
  and an $\ell_1$-separation distance
  \begin{equation*}
    \dist_{1}(F_i,F_{i'})
      = \lVert \vec{u}_i - \vec{u}_{i'} \rVert_1 - 2r_0
      = \frac{2}{3} 
  \end{equation*}
  for $i \neq i'$.
  The $\ell_\infty$-separation can be traced back to the distance of the
  coordinate projections of $F_i$ and $F_{i'}$ onto the
  two-dimensional space $\linspan\{\vec{e}_i, \vec{e}_{i'}\}$, namely
  \begin{equation*}
    \dist_{\infty}(F_i,F_{i'})
        = \frac{\dist_{1}(F_i,F_{i'})}{2}
        = \frac{1}{3} =: c > 0 \,.
  \end{equation*}

    Our aim is to provide a lower bound on the performance of randomized algorithms.
    It turns out to be easier to analyse ``extended'' algorithms
    that base their output not only on the linear information $\vec{Y} = N\vec{X}$
    but 
    on the extended information
    $\wt{\vec{Y}} := (\vec{Y},\mathfrak{J})$.
    By Lemma \ref{lem:monotonInfo},
    a lower bound on the error of the extended algorithms $\wt{\phi}(\vec{Y},\mathfrak{J})$
    is also a lower bound for simple algorithms $\phi(\vec{Y})$.
    Lemma \ref{lem:truncation} gives
    \begin{equation} \label{eq:eps0_general}
    \expect\left[ \| \vec{X} - \wt{\phi}(\vec{Y}, \mathfrak{J}) \|_\infty \cdot \ind_{\Omega'} 
            \right] 
        \geq c \cdot \left( \frac{1}{2} 
                        \cdot \P\left( D_{I \mid \vec{Y}, \mathfrak{J}}(\vec{Y},\mathfrak{J}) 
                                            \leq \frac{1}{2} 
                                \right)
                    - \delta
                \right) ,
    \end{equation}
    where $D_{I \mid \vec{Y}, \mathfrak{J}}(\vec{y}, \mathfrak{j})
    = \max_{i \in [M]}\P(I = i \mid \vec{Y} = \vec{y}, \mathfrak{J} = \mathfrak{j})$.
  Conditioned on $\{\mathfrak{J} = \mathfrak{j}\}$,
  the disturbance $\sigma N\vec{W}$ has the covariance matrix $\sigma \Sigma_{\mathfrak{j}}$
  in the information space~$\R^n$, associated with the ellipsoid $\sigma E_{\mathfrak{j}}$,
  see the notation of Lemma~\ref{lem:cj-in-EJ}.
  We apply Lemma~\ref{lem:points-in-gauss}, namely,
  provided $m \geq C \cdot \left(r_1 + 3\sqrt{n}\right)^n$,
  if for $\mathfrak{j} \in \mathcal{J}_{2n}$
  we have
  \begin{equation*}
    \#\left\{j \in [m] \colon N\vec{u}_j \in r_1 \sigma E_{\mathfrak{j}}\right\} \geq \frac{m}{2}
    \,,
  \end{equation*}
  then
  \begin{equation*}
    \P\left( D_{I \mid \vec{Y}, \mathfrak{J}}(\vec{Y},\mathfrak{J}) \leq \frac{1}{2}
            \;\middle|\; \mathfrak{J} = \mathfrak{j} \right)
        \geq \frac{1}{5} \,.
  \end{equation*}
With $N\vec{u}_j = \frac{2}{3} \vec{c}_j$
  and $\sigma = \frac{1}{18n}$,
  Lemma~\ref{lem:cj-in-EJ} shows that this prerequisite is met with probability
  \begin{equation*}
    \P\left(\#\left\{j \in [m] \colon N\vec{u}_j \in r_1 \sigma E_{\mathfrak{J}}\right\} \geq \frac{m}{2}\right) \geq \frac{1}{2}
  \end{equation*}
  if we choose $r_1 = 12n \cdot 2^n$,
  which then leads to
  \begin{equation*}
    \P\left( D_{I \mid \vec{Y}, \mathfrak{J}}(\vec{Y},\mathfrak{J}) \leq \frac{1}{2}\right)
      \geq \frac{1}{5} \cdot \frac{1}{2} = \frac{1}{10} \,.
  \end{equation*}
  Plugging this into~\eqref{eq:eps0_general},
  for any algorithm $\phi \circ N$ we obtain the error bound
  \begin{equation*}
    e^{\mu}(\phi \circ N, \ell_1^m \embed \ell_\infty^m)
      \geq \frac{1}{3} \cdot \left(\frac{1}{2} \cdot \frac{1}{10} - e^{-4}\right)
      =: \eps_0
      \approx 0.010561 \,.
  \end{equation*}
  The condition $m \geq C \cdot \left(r_1 + 3\sqrt{n}\right)^n$
  we need for the application of Lemma~\ref{lem:points-in-gauss} can be
  replaced by a stronger condition, namely
  \begin{equation*}
    m \geq C \cdot e^{an^2} \geq C \cdot \left(12n \cdot 2^n + 3\sqrt{n}\right)^n \,,
  \end{equation*}
  with suitable $a > 0$.
  (A universal choice would be $a = 3\log(3) \approx 3.2951$ with equality for $n=1$, but for $n \geq 18$ we could take $a = 1$. However, no matter how much we restrict $n$, we always have $a > \log(2) \approx 0.6931$.)
  By Bakhvalov's technique, we are done.
\end{proof}

\begin{remark}
    The main result shows a lower bound of order $\sqrt{\log m}$ for the complexity $n$ when aiming for a small error $\eps > 0$.
    In view of the upper bounds we conjecture that a lower bound of order $\log m$ would be optimal.
    The gap seems to be caused by the rather pessimistic inflation factor $2^n$ in Lemma~\ref{lem:cj-in-EJ}. Any improvement that replaces this factor by something polynomial in $n$ would lead to a lower bound of order $\frac{\log m}{\log \log m}$.

    Still, establishing these lower bounds, we have a first proof that the difficulty of the problem $\ell_1^m \embed \ell_\infty^m$ grows with $m$, allowing to establish 
    an approximability result for general linear problems, see Section~\ref{Sec:solvability}.
    Quantitatively, our bounds are yet good enough to compare the non-adaptive setting with adaptive algorithms and prove a result on the possible gaps, see Section~\ref{sec:UB}.
\end{remark}

\begin{corollary}\label{Cor:EmbNonApp}
    For the identical embedding $\ell_1 \embed \ell_{\infty}$ we have
    \begin{equation*}
       \lim_{n \rightarrow \infty} e^{\ran, \nonada}(n,\ell_1 \embed \ell_{\infty})
        \geq \eps_0 > 0 \,.
    \end{equation*}
\end{corollary}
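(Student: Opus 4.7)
The plan is to reduce the infinite-dimensional assertion to Theorem~\ref{thm:l1->loo,non-ada}. For each fixed $n \in \N$, that theorem furnishes a dimension $m = m(n) \in \N$ with $m(n) \geq C \cdot e^{an^2}$ such that $e^{\ran,\nonada}(n,\ell_1^{m(n)} \embed \ell_\infty^{m(n)}) \geq \eps_0$. The task is therefore to prove the dimension-reduction inequality
\[
    e^{\ran,\nonada}(n,\ell_1^m \embed \ell_\infty^m)
        \leq e^{\ran,\nonada}(n,\ell_1 \embed \ell_\infty)
\]
for every $m \in \N$, which will show that $\eps_0$ is a lower bound for $e^{\ran,\nonada}(n,\ell_1 \embed \ell_\infty)$ for every $n$, and in particular for the limit as $n \to \infty$.

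For the reduction I would use the canonical isometric inclusion $\iota \colon \R^m \to \ell_1$, $\vec{x} \mapsto (x_1,\ldots,x_m,0,0,\ldots)$, together with the coordinate truncation $P_m \colon \ell_\infty \to \R^m$, which is nonincreasing in the $\ell_\infty$-norm. Given any non-adaptive randomized algorithm $A_n^\omega = \phi^\omega \circ N^\omega$ of cost $n$ for $\ell_1 \embed \ell_\infty$, form $\wt{A}_n^\omega := P_m \circ A_n^\omega \circ \iota$, an algorithm for $\ell_1^m \embed \ell_\infty^m$. This algorithm is again non-adaptive of cost at most $n$ because each information functional $L_k^\omega \circ \iota$ lies in $(\R^m)'$, and the composition with the linear maps $\iota$ and $P_m$ preserves measurability in $\omega$.

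For every $\vec{x} \in \R^m$ the pointwise estimate
\[
    \|\wt{A}_n^\omega(\vec{x}) - \vec{x}\|_\infty
    = \|P_m(A_n^\omega(\iota \vec{x}) - \iota \vec{x})\|_\infty
    \leq \|A_n^\omega(\iota \vec{x}) - \iota \vec{x}\|_\infty
\]
holds, and since $\|\iota \vec{x}\|_{\ell_1} = \|\vec{x}\|_1$, taking expectations, suprema over $B_1^m$, and then infima over all non-adaptive randomized algorithms yields the claimed inequality. Applying it with $m = m(n)$ and invoking Theorem~\ref{thm:l1->loo,non-ada} gives $e^{\ran,\nonada}(n,\ell_1 \embed \ell_\infty) \geq \eps_0$ for every $n \in \N$, from which the stated limit inequality is immediate. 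No substantive difficulty arises here; once Theorem~\ref{thm:l1->loo,non-ada} is available, the corollary is a routine dimension-reduction argument through the isometric embedding $\iota$ and the truncation $P_m$.
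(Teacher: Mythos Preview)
Your argument is correct and is exactly the routine dimension-reduction step the paper leaves implicit: the corollary is stated without proof immediately after Theorem~\ref{thm:l1->loo,non-ada}, and your use of the isometric inclusion $\iota \colon \R^m \to \ell_1$ together with the coordinate truncation $P_m \colon \ell_\infty \to \R^m$ to transfer any non-adaptive algorithm for $\ell_1 \embed \ell_\infty$ into one for $\ell_1^m \embed \ell_\infty^m$ is the standard justification the authors take for granted.
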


\begin{remark}
    For deterministic algorithms and $n \geq 1$ we have
        \begin{equation*}
            e^{\det}(n, \ell_1 \embed \ell_{\infty}) 
            = 
            \frac{1}{2} \,,
        \end{equation*}
see \cite[Prop~11.11.10]{P80}.
The result in~\cite{P80} is formulated for approximation numbers,
that is, the error of deterministic linear algorithms,
but since the error is measured in the supremum norm,
linear algorithms are optimal in the deterministic setting,
see \cite{CW04} or \cite[Thms~4.5 and~4.8]{NW08}.
We conjecture that randomized methods cannot improve upon the deterministic error in this infinite-dimensional problem,
which would mean that we could replace $\eps_0$ by $\frac{1}{2}$ in Corollary~2.9,
and we speculate that this even holds for adaptive randomized methods.
\end{remark}

\subsection{Impossibility to approximate non-compact operators}
\label{Sec:solvability}

A linear operator $S$ between Banach spaces is called \emph{approximable} with respect to an error notion $e(n,S)$  if
\begin{equation*}
  e(n,S) \xrightarrow[n \to \infty]{} 0 \,.
\end{equation*}

\begin{theorem}\label{Thm:Approxmability}
Let $F,G$ be Banach spaces.
An operator $S:F \rightarrow G$ is approximable with respect to $e^{\ran, \nonada}$ if  and only if    $S$ is compact.
\end{theorem}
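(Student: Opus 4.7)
The plan is to prove the two implications separately. For the ``$S$ compact $\Rightarrow$ approximable'' direction I would construct explicit deterministic non-adaptive algorithms (which in particular count as randomized), whereas for the converse I would use the universality of $\ell_1 \embed \ell_\infty$ among non-compact operators to reduce to Corollary~\ref{Cor:EmbNonApp}.

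For the first direction, observe that $e^{\ran,\nonada}(n,S) \leq e^{\deter,\nonada}(n,S)$, so it is enough to build deterministic non-adaptive algorithms with small worst-case error. Given $\eps>0$, total boundedness of $S(B_F)$ in $G$ yields a finite $(\eps/4)$-net $g_1,\dots,g_M \in S(B_F)$. By Hahn-Banach, for each pair $i<j$ with $g_i \neq g_j$ take a norming functional $g_{ij}^{*} \in G^{*}$ with $\|g_{ij}^{*}\|=1$ and $g_{ij}^{*}(g_i-g_j) = \|g_i-g_j\|$; the functionals $L_{ij} := g_{ij}^{*} \circ S \in F^*$ then constitute a non-adaptive linear information map $N\colon F \to \R^n$ with $n \leq \binom{M}{2}$. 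A short triangle-inequality calculation shows that, for $f,f' \in B_F$ with $N(f) = N(f')$, letting $g_i,g_j$ be net points nearest to $S(f),S(f')$, the equality $g_{ij}^{*}(S(f)-S(f'))=0$ forces
\[
  \|g_i-g_j\| = g_{ij}^{*}(g_i - S(f)) + g_{ij}^{*}(S(f')-g_j) \leq \tfrac{\eps}{2},
\]
hence $\|S(f)-S(f')\|_G \leq \eps$. Each fibre $S(B_F \cap N^{-1}(\vec y))$ then has $G$-diameter at most $\eps$, and a measurable choice of a point in each such fibre (by a standard measurable-selection argument in the Polish setting) gives a deterministic non-adaptive algorithm of worst-case error at most $\eps$.

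For the converse I would argue by contrapositive. Assume $S$ is not compact. Proposition~\ref{prop:UniversalNonComp} then supplies bounded linear maps $A \colon \ell_1 \to F$ and $B \colon G \to \ell_\infty$ with $B \circ S \circ A = \id_{\ell_1 \embed \ell_\infty}$. From any non-adaptive randomized algorithm $A_n^\omega = \phi^\omega \circ N^\omega$ for $S$ one builds
\[
  \widetilde A_n^\omega(\vec x) := \|A\| \cdot B\bigl(\phi^\omega(N^\omega(A(\vec x)/\|A\|))\bigr),
\]
which is non-adaptive for $\ell_1 \embed \ell_\infty$ because $\vec x \mapsto N^\omega(A(\vec x)/\|A\|)$ is linear in $\vec x$. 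For $\|\vec x\|_1 \leq 1$ we have $A(\vec x)/\|A\| \in B_F$, and $BSA = \id$ together with the rescaling give
\[
  \|\widetilde A_n^\omega(\vec x) - \vec x\|_\infty \leq \|A\|\|B\| \cdot \|\phi^\omega(N^\omega(A(\vec x)/\|A\|)) - S(A(\vec x)/\|A\|)\|_G.
\]
Taking expectations and suprema yields $e^{\ran,\nonada}(n,\ell_1 \embed \ell_\infty) \leq \|A\|\|B\| \cdot e^{\ran,\nonada}(n,S)$, so $e^{\ran,\nonada}(n,S) \to 0$ would force the left side to zero and contradict Corollary~\ref{Cor:EmbNonApp}. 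The only delicate ingredient is the measurable selection in the first direction; the converse is then a clean rescaling argument once the universal factorization is invoked.
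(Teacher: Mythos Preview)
Your proof is correct and, for the key direction (non-compact $\Rightarrow$ not approximable), identical to the paper's: factor the embedding $\ell_1 \embed \ell_\infty$ through $S$ via Proposition~\ref{prop:UniversalNonComp}, transport a hypothetical approximating sequence for $S$ to one for $\ell_1 \embed \ell_\infty$, and contradict Corollary~\ref{Cor:EmbNonApp}; the only cosmetic difference is that the paper normalizes the map $\ell_1 \to F$ to have norm~$1$, whereas you carry the factor $\|A\|\,\|B\|$ explicitly. For the forward direction the paper simply asserts as known that compact operators are deterministically (hence non-adaptively, randomly) approximable, while you supply an explicit net-plus-norming-functionals construction; the measurable-selection step you flag can be sidestepped by letting $\phi(\vec y)$ output the net point $g_i$ minimizing $\max_{j}|y_{ij}-g_{ij}^{*}(g_i)|$, which is piecewise constant in $\vec y$ and gives error at most $3\eps/4$.
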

To prove Theorem \ref{Thm:Approxmability} we need a result
from \cite{J71} which shows that the identical embedding embedding $\ell_1 \hookrightarrow \ell_{\infty}$ is in a sense a universal non-compact operator.

\begin{proposition}\label{prop:UniversalNonComp}
Let $X,Y$ be Banach spaces and $S:\ell_1 \rightarrow \ell_{\infty}$ be the identical embedding. A bounded operator $T: X \rightarrow Y$ is non-compact if and only if there exist bounded operators $B \colon \ell_1 \to X$, $C\colon Y \to \ell_\infty$ such that $S = CTB$,
that is, the following diagram commutes:
\[
\begin{tikzcd}
X \arrow{r}{T}
& Y \arrow{d}{C} \\
{\ell_1} \arrow{r}{S}
\arrow{u}{B}
& {\ell_\infty}
\end{tikzcd}
\]
\end{proposition}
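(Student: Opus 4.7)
The plan is to prove the two directions of the equivalence separately. The ``if'' direction (existence of such $B$, $C$ implies $T$ non-compact) is elementary; the ``only if'' direction (non-compactness implies factorization) is the classical content of \cite{J71} and requires a basic-sequence argument.

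For the easy direction, suppose $S = CTB$ with $B \colon \ell_1 \to X$ and $C \colon Y \to \ell_\infty$ bounded. If $T$ were compact, then $CTB$ would also be compact (since the class of compact operators is a two-sided ideal). But $S \colon \ell_1 \embed \ell_\infty$ is \emph{not} compact: the canonical unit vectors $\vec{e}_n \in \ell_1$ lie in the unit ball, while $\|S\vec{e}_n - S\vec{e}_m\|_\infty = 1$ for $n \neq m$, so $(S\vec{e}_n)$ has no norm-convergent subsequence. This contradiction shows $T$ is non-compact.

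For the hard direction, assume $T \colon X \to Y$ is non-compact. Then there exists a sequence $(\vec{x}_n) \subset B_X$ such that $(T\vec{x}_n)$ has no norm-convergent subsequence. Passing to a subsequence, one may assume $\|T\vec{x}_n - T\vec{x}_m\| \geq \delta > 0$ for $n \neq m$. Setting $\vec{u}_n := \frac{1}{2}(\vec{x}_{2n} - \vec{x}_{2n-1})$, we obtain a bounded sequence in $X$ (with $\|\vec{u}_n\| \leq 1$) such that $\vec{v}_n := T\vec{u}_n$ satisfies $\|\vec{v}_n\| \geq \delta/2$. By the Bessaga--Pelczynski selection principle (applied after, if necessary, passing to a further subsequence that is either weakly null or, by Rosenthal's $\ell_1$-theorem, equivalent to the $\ell_1$-basis), we may refine to a subsequence (still denoted $(\vec{v}_n)$) which is a \emph{basic sequence} in $Y$ with basis constant $K < \infty$. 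The associated biorthogonal coordinate functionals $\vec{v}_n^* \in (\overline{\linspan}\{\vec{v}_k\})'$ satisfy $\vec{v}_n^*(\vec{v}_m) = \delta_{nm}$ and $\sup_n \|\vec{v}_n^*\| \leq 2K/\delta$; extend each to $\vec{y}_n^* \in Y'$ by Hahn--Banach while preserving norms.

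Now define the factor operators. Let $B \colon \ell_1 \to X$ be the bounded linear extension of $B\vec{e}_n := \vec{u}_n$, which satisfies $\|B\| \leq \sup_n \|\vec{u}_n\| \leq 1$ because $\sum |a_n|\,\|\vec{u}_n\| \leq \sum |a_n|$. Let $C \colon Y \to \ell_\infty$ be defined by $C(\vec{y}) := (\vec{y}_n^*(\vec{y}))_{n \in \N}$, which is bounded by $\sup_n \|\vec{y}_n^*\|$. Then for every standard basis vector $\vec{e}_n \in \ell_1$,
\begin{equation*}
    CTB(\vec{e}_n) = C(T\vec{u}_n) = C(\vec{v}_n) = (\vec{y}_k^*(\vec{v}_n))_{k \in \N} = (\delta_{kn})_{k \in \N} = S(\vec{e}_n) \,.
\end{equation*}
Since the $\ell_1$-span of $(\vec{e}_n)$ is dense in $\ell_1$ and both $CTB$ and $S$ are bounded, they agree on all of $\ell_1$, giving the factorization $S = CTB$.

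The main obstacle is step three: producing a subsequence of $(\vec{v}_n)$ that is a basic sequence with uniformly bounded biorthogonals. This is where one must invoke nontrivial Banach space structure theory (Mazur's basic sequence theorem together with Bessaga--Pelczynski / Rosenthal dichotomy) rather than elementary Hahn--Banach arguments, since without basicness one only obtains \emph{approximate} rather than exact biorthogonality, and exact biorthogonality is what makes $CTB$ equal to $S$ instead of merely close to it.
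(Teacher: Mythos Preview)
The paper does not prove this proposition; it is quoted from \cite{J71} as a known result and then used as a black box in the proof of Theorem~\ref{Thm:Approxmability}. So there is no proof in the paper to compare against, and I comment only on the argument itself.

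Your ``if'' direction is correct. In the ``only if'' direction the overall plan --- extract a basic sequence inside $T(B_X)$, use its biorthogonal functionals to build $C$, and map the $\ell_1$-basis to preimages to build $B$ --- is the standard one, but the order of your steps creates a genuine gap. After forming $\vec{v}_n=\tfrac12(T\vec{x}_{2n}-T\vec{x}_{2n-1})$ you only know $\|\vec{v}_n\|\geq\delta/2$; you have \emph{lost} the information that $(\vec{v}_n)$ has no convergent subsequence. Concretely, take $T=\id$ on $\ell_2(\N_0)$ with $\vec{x}_{2k-1}=\vec{e}_k$ and $\vec{x}_{2k}=\vec{e}_k+\vec{e}_0$: then $(T\vec{x}_n)$ is $1$-separated, yet $\vec{v}_k=\tfrac12\vec{e}_0$ for every $k$. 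Rosenthal's dichotomy applied to this constant sequence returns a weakly Cauchy subsequence that is not weakly null, and there is no basic subsequence to extract --- so Bessaga--Pe\l{}czy\'{n}ski cannot be invoked as stated.

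The repair is simply to swap two steps: apply Rosenthal's dichotomy to the $\delta$-separated sequence $(T\vec{x}_n)$ \emph{before} taking differences. In the $\ell_1$-branch you already have a basic sequence and may set $\vec{u}_k=\vec{x}_{n_k}$. In the weakly Cauchy branch, \emph{now} form $\vec{v}_k=T\vec{x}_{n_{2k}}-T\vec{x}_{n_{2k-1}}$; these are weakly null (differences along a weakly Cauchy sequence) and still satisfy $\|\vec{v}_k\|\geq\delta$, so Bessaga--Pe\l{}czy\'{n}ski legitimately yields a basic subsequence. Your construction of $B$ and $C$ and the verification $CTB=S$ on the unit vectors then go through unchanged.
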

\begin{proof}[Proof of Theorem \ref{Thm:Approxmability}]
Compact operators are approximable with respect to deterministic algorithms. Since each deterministic algorithm is formally also a randomized algorithm (with trivial dependence on $\omega$) we get that compactness of $T$ is sufficient for approximability.

Now we show that compactness of $T$ is also necessary.
Let $T$ be non-compact and consider the 
factorization $S = CTB$ of the identical embedding $S: \ell_1 \embed \ell_\infty$ from Proposition~\ref{prop:UniversalNonComp}.
We may assume $\|B\| = 1$, otherwise rescale
$B' := \|B\|^{-1} B$ and $C':= \|B\| C$ to get $S = C'TB'$.
Aiming at a contradiction,
assume that there exists a sequence of randomized non-adaptive algorithms
$\widetilde{A}_n = \widetilde{\phi}_n \circ \widetilde{N}_n$ 
for $n \in \N$ which approximates $T$ arbitrarily well, that is, $e^{\ran}\bigl(\wt{A}_n,T\bigr)$ converges to zero.
Define the algorithms $A_n = \phi_n \circ N_n$
with $\phi_n := C \circ \wt{\phi}_n$ and $N_n := \wt{N}_n \circ B$
for approximating $\ell_1 \embed \ell_\infty$.
It is straightforward to check that
\begin{equation*}
  e^{\ran}(A_n, \ell_1 \embed \ell_\infty)
    \leq \|C\| \cdot e^{\ran}\bigl(\wt{A}_n,T\bigr) \xrightarrow[n \to \infty]{} 0 \,.
\end{equation*}
This is a contradiction to Corollary~\ref{Cor:EmbNonApp}.
\end{proof}

\section{Upper bounds for adaptive algorithms} 
\label{sec:UB}

Theorem~\ref{thm:l1->loo,non-ada} is a lower bound for
\emph{non-adaptive} algorithms
applied to the approximation problem $\ell_1^m \embed \ell_\infty^m$.
As mentioned in the introduction, this bound holds for all
embeddings $\ell_p^m \embed \ell_q^m$ with $p,q \in [1,\infty]$,
namely, we have
\begin{equation*}
  e^{\ran,\nonada}(n,\ell_p^m \embed \ell_q^m)
      \geq \eps_0 > 0
\end{equation*}
for $m \geq C e^{an^2}$ with suitable constants $C,a > 0$.
As it turns out, adaptive randomized algorithms can do significantly better
than non-adaptive algorithms in this regime of very large $m$,
at least for embeddings $\ell_1^m \embed \ell_q^m$ with $q \in [2,\infty]$,
as Theorem~\ref{thm:upper} shows.
The theorem is stated for $\ell_1^m \embed \ell_2^m$,
but of course this upper bound also holds for the easier problems
with smaller $\ell_q^m$-norms, that is, $q > 2$,
in particular for $\ell_1^m \embed \ell_\infty^m$.

  The upper bound of Theorem~\ref{thm:upper} is based on an
  adaptive sparse recovery algorithm due to Woodruff et al.~\cite{IPW11,LNW17, LNW18}.
  A vector $\vec{x}$ is called $k$-sparse
  if it has at most $k$ non-zero entries.
  For general $\vec{x} = (x_j)_{j \in [m]} \in \R^m$, Woodruff et al.\ 
  aim to find an approximation $\vec{x}^\ast$
  that is close to the best $k$-term approximation with an $\ell_2/\ell_2$-type of guarantee, namely, for some error demand $\epsilon > 0$, we shall have
  \begin{equation} \label{eq:crite} 
    \|\vec{x} - \vec{x}^\ast\|_2
      \leq \min_{\vec{x}' \colon \text{$k$-sparse}} 
                (1+\epsilon) \|\vec{x} - \vec{x}'\|_2
  \end{equation}
  with high probability. Their algorithm works in two stages.
  The first stage reduces the $k$-sparse recovery problem to a $1$-sparse recovery problem
  by randomly sampling a subset $\mathfrak{m} \subset [m]$ of the big index set 
  where $\#\mathfrak{m} \asymp \frac{\epsilon m}{k}$
  (the authors discuss several adaptive and non-adaptive reduction methodologies).
  With a certain probability,
  the reduced vector $\vec{x}_{\mathfrak{m}} := (x_j)_{j \in \mathfrak{m}}$
  for such a candidate set $\mathfrak{m} \subset [m]$
  contains one ``heavy coordinate'' $j^\ast \in \mathfrak{m}$ in the sense of
  $|x_{j^\ast}| \gg \|\vec{x}_{\mathfrak{m}\setminus\{j^\ast\}}\|_2$.
  The second stage of the algorithm is a shrinkage procedure
  $\mathfrak{m} = \mathfrak{m}_0 \supset \mathfrak{m}_1 \supset \ldots \supset \{j'\}$,
  eventually identifying $j^\ast = j'$ with high probability of success.
  Within each shrinkage step $\mathfrak{m}_i \supset \mathfrak{m}_{i+1}$,
  we compare a random linear measurement of $\vec{x}_{\mathfrak{m}_i}$
  with a slightly disturbed measurement
  to further narrow down the location of $j^\ast$.
  The crucial point is that this shrinking process becomes easier
  the larger the heavy coordinate is
  compared to the remaining coordinates of the candidate set.
  This allows for an accelerated adaptive shrinkage that requires only $\mathcal{O}\left(\log\log\frac{\epsilon m}{k}\right)$
  steps.
  Repeating this $1$-sparse recovery
  for a number of $K \asymp \frac{k}{\epsilon}$ initial candidate sets,
  the algorithm yields an index set $\mathfrak{K} \subset [m]$, $\#\mathfrak{K} \leq K$,
  which hopefully contains (most of) the $k$ largest coordinates.
  The corresponding entries can be measured exactly and the output~$\vec{x}^\ast$
  of the algorithm is defined as
  \begin{equation*}
    x^\ast_j := \begin{cases}
        x_j &\text{if } j \in \mathfrak{K}, \\
        0 &\text{else.}
    \end{cases}
  \end{equation*}
  This also means that, even if the algorithm fails
  to identify the $k$ largest coordinates,
  the error is bounded by the norm of the input vector,
  $\|\vec{x} - \vec{x}^\ast\|_2 \leq \|\vec{x}\|_2$.

\begin{theorem} \label{thm:upper}
  Using adaptive randomized algorithms we obtain the upper bound
  \begin{equation*}  \label{up-bound}
    e^{\ran}(n,\ell_1^m \embed \ell_2^m)
      \preceq \sqrt{\frac{\log \log \frac{m}{n} }{n}} \,,
  \end{equation*} 
  which holds for $n \in \N$ and $m \geq cn$ with a suitable constant $c > 0$.
\end{theorem}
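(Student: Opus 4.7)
The plan is to apply the adaptive sparse recovery algorithm of Woodruff et al.\ described just before the statement. For an input $\vec{x} \in B_1^m$, we feed the algorithm with precision $\epsilon = 1$ (any positive constant works) and sparsity parameter $k$ to be chosen below. It uses $n' \preceq k \log\log(m/k)$ linear measurements and, with high probability, returns a vector $\vec{x}^\ast$ supported on at most $k$ coordinates of $\vec{x}$ (read off exactly) satisfying the $\ell_2/\ell_2$-guarantee \eqref{eq:crite} with $\epsilon = 1$, i.e., $\|\vec{x} - \vec{x}^\ast\|_2 \leq 2 \min_{\vec{x}'\colon k\text{-sparse}} \|\vec{x} - \vec{x}'\|_2$. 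Regardless of success, the deterministic fallback $\|\vec{x} - \vec{x}^\ast\|_2 \leq \|\vec{x}\|_2 \leq \|\vec{x}\|_1 \leq 1$ always holds.

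Next I would control the best $k$-term approximation of inputs from the $\ell_1$-ball. Ordering the entries by decreasing magnitude gives $|x_{(j)}| \leq 1/j$ since $j\,|x_{(j)}| \leq \|\vec{x}\|_1 \leq 1$, whence
\[
\min_{\vec{x}'\colon k\text{-sparse}} \|\vec{x} - \vec{x}'\|_2^2 = \sum_{j > k} x_{(j)}^2 \leq \sum_{j > k} \frac{1}{j^2} \leq \frac{1}{k}.
\]
On the success event, therefore, $\|\vec{x} - \vec{x}^\ast\|_2 \leq 2/\sqrt{k}$. I then choose $k \asymp n/\log\log(m/n)$. For $m \geq cn$ with $c$ sufficiently large, $\log\log(m/k) \asymp \log\log(m/n)$ (since $\log(m/n)$ dominates $\log\log\log(m/n)$), so $k \log\log(m/k) \preceq n$ and the algorithm fits within a measurement budget of $n$ after tuning the hidden constant. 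Writing $\delta$ for the algorithm's failure probability, this yields
\[
\expect\|\vec{x} - \vec{x}^\ast\|_2 \leq \frac{2}{\sqrt{k}} + \delta \preceq \sqrt{\frac{\log\log(m/n)}{n}} + \delta,
\]
uniformly over $\vec{x} \in B_1^m$, which translates into the desired bound on $e^{\ran}(n, \ell_1^m \embed \ell_2^m)$ via the standard definition.

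The main obstacle will be keeping $\delta$ small enough to be dominated by the target $\sqrt{\log\log(m/n)/n}$. The algorithm as sketched attains only a constant success probability within the stated budget, and naively boosting via $\mathcal{O}(\log n)$ independent copies would inflate the measurement count by a logarithmic factor, weakening the final rate. This is resolved by invoking the refined analyses of the cited papers, in which the shrinkage stage is designed so that the failure probability can be driven to $m^{-c}$ at only constant-factor overhead, or alternatively by a verification step that detects and retries the failing $1$-sparse subproblems while almost always recognising success. Either way, $\delta$ can be rendered negligible compared to the target error, and the claimed upper bound follows.
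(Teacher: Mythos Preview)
Your approach is essentially the same as the paper's: invoke the adaptive sparse recovery algorithm, combine with the $k$-term approximation bound $\|\vec{x}-\vec{x}_k\|_2 \leq k^{-1/2}$ for $\vec{x}\in B_1^m$, and balance $k$ against the measurement budget. The only substantive difference is that you parametrize by fixing $n$ and choosing $k \asymp n/\log\log(m/n)$, whereas the paper fixes a target error $\eps$, sets $k = \lceil 16\eps^{-2}\rceil$, and then inverts the relation $n \asymp \eps^{-2}\log\log(m\eps^2)$; these are equivalent bookkeeping choices.

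Regarding the ``obstacle'' you flag: there is no need for boosting or a verification step. The paper resolves this simply by citing the improved guarantee of \cite[Thm~11]{LNW17}, which already delivers failure probability $\exp(-k^{0.99})$ at cost $n \asymp k\log\log(m/k)$. With your choice $k \asymp n/\log\log(m/n)$ this failure probability is automatically $o(k^{-1/2})$, so $\delta$ is absorbed into the main term without further work.
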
 
\begin{proof} 
We use the adaptive randomized algorithm 
of Indyk, Price and Wood\-ruff \cite[Alg~3.2]{IPW11}  
together with the improvements 
of 
Yi Li, Nakos and Wood\-ruff \cite[Thm~11]{LNW17}.
For any $\vec{x} \in \R^m$, one gets a (random) 
$\vec{x}^*$ such that    
\begin{equation*}
    \| \vec{x} - \vec{x}^* \|_2
        \leq 2 \| \vec{x} - \vec{x}_k \|_2
\end{equation*}  
with probability  $1- \exp(-k^{0.99})$, 
where $\vec{x}_k$ is the best approximation of $\vec{x}$ by a $k$-sparse vector.
The algorithm requires
\begin{equation*}
    n \asymp k \,  \log \log \frac{m}{k}  
\end{equation*}
adaptive linear measurements (for $m \geq 16k$).

It is well-known (see, e.g., \cite{CDD09}) 
that for $1 \leq p < q$ and $m>k$ 
we have  
\begin{equation*}
    \| \vec{x} - \vec{x}_k \|_q 
        \leq k^{-r} \| \vec{x} \|_p ,
\end{equation*}
where  $r=1/p-1/q$. 
Here, we consider the case $(p,q)=(1,2)$ and assume $\|\vec{x}\|_1 \leq 1$.
If we aim for an expected error smaller or equal $\eps \in (0,\frac{1}{2})$,
we may apply the algorithm of Woodruff et al.\ for $k = \lceil 16 \eps^{-2}\rceil$
to guarantee an error of at most $2k^{-1/2} \leq \frac{\eps}{2}$ with probability $1 - \exp (-k^{0.99})$.
Here we need
\begin{equation} \label{eq:n=loglogm}
    n \asymp  \eps^{-2} \, \log \log ( m \eps^2)
\end{equation}
adaptive random measurements.
In case the algorithm fails to identify the $k$ most important coordinates,
the error will still be bounded by $1$.
The failure probability is at most $\exp (-k^{0.99}) \leq \frac{\eps}{2}$, 
so the expected error is then bounded by
\begin{equation*}
  e^{\ran}(n,\ell_1^m \embed \ell_2^m)
    \leq \left(1 - \exp (-k^{0.99})\right) \cdot \frac{\eps}{2} + \exp (-k^{0.99}) \cdot 1 
    \leq \eps \,,
\end{equation*}
as claimed.
Finally, we use \eqref{eq:n=loglogm} to derive an error bound in terms of $n$ and $m$.
The restriction $m \geq 16k$ gives $m\eps^2 \geq 256$.
Further, from \eqref{eq:n=loglogm} we know that there exists a constant $c > 0$ such that 
\begin{align}
        n \leq c \, \eps^{-2} \log \log (m\eps^2)
    &\quad\Longleftrightarrow\quad
        \eps^{2} \leq c \, \frac{\log \log (m \eps^2)}{n} \label{eq:eps2}\\
    &\quad\Longleftrightarrow\quad
        \log(m \eps^2) \leq \log \frac{cm}{n} + \log \log \log (m \eps^2) \,. \nonumber
\end{align}
Since $\log \log t < \frac{t}{10}$ for $t > 1$, here with $t = \log(m\eps^2) > \log 256$,
we have
\begin{equation*}
  \log(m \eps^2) \leq \frac{10}{9} \log \frac{cm}{n} \,.
\end{equation*}
This eventually, together with \eqref{eq:eps2}, gives the desired bound,
\begin{equation*}
  \eps \leq \sqrt{c \, \frac{\log \left(\frac{10}{9} \log\frac{cm}{n} \right)}{n}}
    \asymp \sqrt{\frac{\log\log\frac{m}{n}}{n}}\,,
\end{equation*}
where the weak asymptotic equivalence holds for, say, $m \geq 16 \max\{1,c^{-1}\} \, n$.
\end{proof}

\begin{remark}
From the deterministic error asymptotics for the problem $\ell_1^m \embed \ell_2^m$, 
see \eqref{eq:KGG12} in the introduction of this paper,
it follows that if $m \succeq e^n$ then
the error of deterministic algorithms is larger than a constant $\eps_0 >0$ that  
does not depend on $n$ or $m$. 
Therefore, the bound in Theorem~\ref{thm:upper} 
is a great improvement which is possible 
if adaptive randomized algorithms are allowed 
and $m$ is 
large. 

Also the improvement over non-adaptive randomized algorithms 
is substantial. 
Consider the case $m = \bigl\lceil C e^{an^2}\bigr\rceil$ such that 
\begin{equation*}
  e^{\ran,\nonada}(n,\ell_1^m \embed \ell_2^m) \geq e^{\ran,\nonada}(n,\ell_1^m \embed \ell_\infty^m)
    \geq \eps_0 > 0\,.
\end{equation*}
Together with Theorem~\ref{thm:upper} we obtain the following consequence. 
\end{remark}

\begin{corollary} \label{cor:adagap}
For 
$m = \bigl\lceil C e^{an^2}\bigr\rceil$ with the constants $C,a>0$ from Theorem~\ref{thm:l1->loo,non-ada},
we obtain
\begin{equation*}
    \frac{e^{\ran}(n,\ell_1^m \embed \ell_2^m)}
        {e^{\ran,\nonada}(n,\ell_1^m \embed \ell_2^m)}
    \preceq \sqrt{\frac{\log n}{n}}
\end{equation*}
for large $n$.
\end{corollary}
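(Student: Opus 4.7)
The plan is to directly combine the lower bound of Theorem~\ref{thm:l1->loo,non-ada} with the upper bound of Theorem~\ref{thm:upper}, after translating between the $\ell_\infty$- and $\ell_2$-norms on the output side and substituting the specific value $m = \lceil C e^{an^2}\rceil$.

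For the denominator, I would first observe that since $\|\vec{x}\|_\infty \leq \|\vec{x}\|_2$ for any $\vec{x} \in \R^m$, any algorithm approximating $\ell_1^m \embed \ell_2^m$ with error at most $\eps$ also approximates $\ell_1^m \embed \ell_\infty^m$ with error at most $\eps$. Hence
\begin{equation*}
  e^{\ran,\nonada}(n,\ell_1^m \embed \ell_2^m)
    \geq e^{\ran,\nonada}(n,\ell_1^m \embed \ell_\infty^m) \,.
\end{equation*}
Theorem~\ref{thm:l1->loo,non-ada} applies precisely because $m = \lceil C e^{an^2}\rceil \geq C e^{an^2}$, so the right-hand side is at least the absolute constant $\eps_0 > 0$.

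For the numerator, I would apply Theorem~\ref{thm:upper} (noting $m \geq cn$ is trivially satisfied for large $n$ in this regime) to get
\begin{equation*}
  e^{\ran}(n,\ell_1^m \embed \ell_2^m)
    \preceq \sqrt{\frac{\log\log\frac{m}{n}}{n}} \,.
\end{equation*}
The key computation is then to evaluate $\log\log(m/n)$ for $m = \lceil C e^{an^2}\rceil$. Since $\log(m/n) = an^2 + \log C - \log n + o(1) \asymp n^2$ for large $n$, we get $\log\log(m/n) \asymp \log(n^2) = 2\log n \asymp \log n$. Hence the numerator is $\preceq \sqrt{(\log n)/n}$.

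Dividing the numerator bound by the constant lower bound $\eps_0$ yields the claimed ratio. There is no real obstacle here—the work has already been done in the two theorems—so this is essentially a substitution and bookkeeping argument, the only subtlety being the monotonicity in $q$ that lets us transfer the $\ell_1^m \embed \ell_\infty^m$ lower bound to $\ell_1^m \embed \ell_2^m$.
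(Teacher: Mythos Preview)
Your proposal is correct and follows essentially the same approach as the paper: the remark preceding the corollary already records the inequality $e^{\ran,\nonada}(n,\ell_1^m \embed \ell_2^m) \geq e^{\ran,\nonada}(n,\ell_1^m \embed \ell_\infty^m) \geq \eps_0$ and then invokes Theorem~\ref{thm:upper}, leaving only the substitution $\log\log(m/n) \asymp \log n$ that you carry out explicitly.
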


\begin{remark} \label{rem:diag-operator} 
In the corollary above we consider a different problem for every $n$, so an obvious question is: Do we find an infinite-dimensional problem $S$
such that the gap of Corollary~\ref{cor:adagap} can be observed for this single problem? The answer is yes up to logarithmic terms.
Inspired by constructions in \cite[Sec~5]{He24},
we consider the following example
defined with the constants $C,a > 0$ from Theorem~\ref{thm:l1->loo,non-ada}
and some $\alpha > 0$:
\begin{align*}
    &N_k := 2^k,
    \quad m_k := \bigl\lceil C e^{aN_k^2}\bigr\rceil
    \qquad\text{for } k \in \N\,,\\
    &S \colon \ell_1 \to \ell_2, \; \vec{x} \mapsto \vec{z}
    \quad\text{where } z_i := \begin{cases}
        x_i &\text{for } 1 \leq i \leq m_1 \\
        \frac{x_i}{\sqrt{k} \, (\log k)^{1/2 + \alpha}} &\text{for } m_{k-1} < i \leq m_k,\; k\geq 2 \,.
    \end{cases}
\end{align*}
Clearly, we can employ Theorem~\ref{thm:l1->loo,non-ada} to find non-adaptive lower bounds for $k \geq 2$:
\begin{align*}
    e^{\ran,\nonada}(N_k,S)
        &\geq \frac{e^{\ran,\nonada}(N_k,\ell_1^{m_k} \embed \ell_2^{m_k})}{\sqrt{k} \, (\log k)^{1/2 + \alpha}}
        \geq \frac{\eps_0}{\sqrt{k} \, (\log k)^{1/2 + \alpha}} \\
        &\asymp (\log N_k)^{-1/2} \, (\log \log N_k)^{-1/2 - \alpha} \,.
\end{align*}
For the upper bounds we define `splitting' indices $l_j := 2^j$ for $j \in \N$,
and index sets $\frak{m}_1 := [m_{l_1}]$, $\frak{m}_j := [m_{l_j}] \setminus [m_{l_{j-1}}]$ for $j \geq 2$.
Given a total budget of $N_k = 2^k$ pieces of information,
we spend at most $n_j$ information functionals on the approximation
of $\vec{x}_{\frak{m}_j}$ for $j=1,\ldots,k$, where
the individual budgets form a decaying sequence:
\begin{equation*}
    n_j := \left\lfloor c_{\alpha}^{-1} \, j^{-1-2\alpha} \, N_k \right\rfloor
        = \left\lfloor c_{\alpha}^{-1} \, j^{-1-2\alpha} \, 2^k \right\rfloor,\quad
    c_{\alpha} := \sum_{j=1}^\infty j^{-1-2\alpha} \,.
\end{equation*}
If $n_j$ is large enough, we can employ an adaptive algorithm $A_{n_j}$
with budget of at most $n_j$ as in the proof of Theorem~\ref{thm:upper} for approximating $\vec{x}_{\frak{m}_j}$.
Since these algorithms are homogeneous, i.e.\ $A_{n_j}(\lambda \vec{x}_{\frak{m}_j}) = \lambda A_{n_j}(\vec{x}_{\frak{m}_j})$ for scalar factors $\lambda \neq 0$,
we have the following error bounds, valid under a constraint $m \geq cn_j$:
\begin{equation*}
    \expect \|\vec{x}_{\frak{m}_j} - A_{n_j}(\vec{x}_{\frak{m}_j})\|_2
        \preceq \|\vec{x}_{\frak{m}_j}\|_1 \cdot 
            \sqrt{\frac{\log \log \frac{\#\frak{m}_j}{n_j}}{n_j}} \,.
\end{equation*}
With $\log_2 N_k = k$ and $\log \log \frac{\# \frak{m}_j}{n_j} \preceq \log \log (\# \frak{m}_j) \preceq \log N_{l_j} \preceq l_j = 2^j$, and inserting the value of $n_j$,
we can simplify this estimate to
\begin{equation} \label{eq:simplifiedUB}
    \expect \|\vec{x}_{\frak{m}_j} - A_{n_j}(\vec{x}_{\frak{m}_j})\|_2
        \preceq \|\vec{x}_{\frak{m}_j}\|_1
            \cdot \sqrt{\frac{2^j \cdot j^{1+2\alpha}}{2^k}} \,.
\end{equation}
Since for $m \leq n_j$ there is an algorithm $A_{n_j}$ that recovers $\vec{x}_{\frak{m}_j}$ exactly, by adjusting the implicit constant,
\eqref{eq:simplifiedUB} extends to $n_j > m/c$ as well.
If, however, $n_j$ is too small to give an error bound of, say, $\frac{1}{2} \|\vec{x}_{\frak{m}_j}\|_1$, than the zero algorithm $A_{0}(\vec{x}_{\frak{m}_j}) = \vec{0}$
produces an error at most $\|\vec{x}_{\frak{m}_j}\|_2 \leq \|\vec{x}_{\frak{m}_j}\|_1$, so~\eqref{eq:simplifiedUB} holds true for all $j$, even if $n_j$ is small or even zero for certain large $j$.
By approximating $\vec{x}_{\frak{m}_j}$ for $j=1,\ldots,k$, we can find an approximation of $\vec{z}_{\frak{m}_j}$, while approximating the tail $\vec{z}_{\N \setminus [m_{l_k}]}$ with zero.
Applying the triangle inequality to the definition of the Monte Carlo error,
we obtain the following adaptive upper bound:
\begin{align*}
    &e^{\ran}(N_k,S) \\
        &\preceq \sup_{\|\vec{x}\|_1 \leq 1} \left(\frac{\|\vec{x}_{\N \setminus [m_{l_k}]}\|_1}{2^{k/2} \cdot k^{1/2 + \alpha}} + \sum_{j=1}^k \frac{\|\vec{x}_{\frak{m}_j}\|_1}{\max\{2^{(j-1)/2} \cdot (j-1)^{1/2+\alpha},1\}} \cdot \sqrt{\frac{2^j \cdot j^{1+2\alpha}}{2^k}} \right)\,, \\
        &\preceq \max\left\{ \frac{1}{2^{k/2} \cdot k^{1/2 + \alpha}}, 
            \frac{1}{2^{j/2} \cdot j^{1/2 + \alpha}} \cdot \sqrt{\frac{2^j \cdot j^{1+2\alpha}}{2^k}}
        \text{ for } j=1,\ldots,k \right\} \\
        &= 2^{-k/2} = \frac{1}{\sqrt{N_k}} \,.
\end{align*}
Since $N_k$ is chosen in dyadic steps, we find an asymptotic gap for all integers $n \geq 2$:
\begin{equation*}
    \frac{e^{\ran}(n,S)}{e^{\ran,\nonada}(n,S)} \preceq \sqrt{\frac{\log n}{n}} \cdot (\log \log n)^{1/2 + \alpha} \,.
\end{equation*}
\end{remark}

\begin{remark} \label{rem:maximalgap}
Having found such a gap, we may pose the question: How much can be gained in general 
with respect to optimal error bounds if we choose the $n$ 
pieces of linear information
adaptively and/or randomly? 
This question is studied for general linear problems 
$S: F \to G$ between Banach spaces $F$ and $G$
in~\cite{KNU24}. 
Using 
inequalities between $s$-numbers it turns out 
that the maximal gap is of the order $n$ for general spaces 
and of the order $n^{1/2}$ if $F$ or $G$ is a Hilbert space.
In an upcoming paper~\cite{KW24+}, new upper bounds for the adaptive approximation of $\ell_1^m \embed \ell_\infty^m$ will show a gap of order 
$n$ (up to logarithmic terms),
and for $\ell_2^m \embed \ell_\infty^m$ we can also show a gap of order $n^{1/2}$. 

\end{remark} 


\appendix

\section{Gaussian measures} \label{sec:gauss}

A vector $\vec{Z} = (Z_1, \ldots, Z_m)$ in $\R^m$ is called a \emph{standard Gaussian vector}
if its entries are independent standard Gaussian random variables, $Z_j \iid \Normal(0,1)$.
For any given point $\vec{c} \in \R^m$ and any given positive semi-definite matrix $\Sigma \in \R^{m \times m}$
we can construct a Gaussian random vector with \emph{mean}~$\vec{c}$ and \emph{covariance matrix} $\Sigma$ by
\begin{equation*}
  \vec{X} := \vec{c} + \sqrt{\Sigma} \vec{Z} \sim \Normal(\vec{c}, \Sigma) \,.
\end{equation*}
Here, $\sqrt{\Sigma}$ is defined via the functional calculus for self-adjoint matrices,
that is, for diagonal matrices $D = \diag(\sigma_1,\ldots,\sigma_m)$ with non-negative entries we define $\sqrt{D} = \diag(\sqrt{\sigma_1},\ldots,\sqrt{\sigma_m})$,
and for $\Sigma = Q D Q^\top$ with an orthogonal matrix $Q$
we have $\sqrt{\Sigma} = Q \sqrt{D} Q^\top$.
Further, for any matrix $A \in \R^{n \times m}$, the following vector is a Gaussian vector in $\R^n$:
\begin{equation} \label{eq:gaussianProjection}
  \vec{Y} := A\vec{X} \sim \Normal(A\vec{c}, A \Sigma A^\top) \,.
\end{equation}

We cite a very basic version of concentration of Gaussian vectors around the expectation of its norm,
see~\cite[pp.~180/181]{Pis86} or \cite[Lem~2.1.6]{adler2009random}
for a more general statement.
\begin{lemma} \label{lem:concentration}
  Let $\vec{Z}$ be a standard Gaussian vector in $\R^m$ and $\|\cdot\|_{\ast}$ an arbitrary norm on $\R^m$
  with $\ell_2$-Lipschitz constant
  \begin{equation*}
    L_\ast := \sup_{\vec{0} \neq \vec{z} \in \R^m} \frac{\|\vec{z}\|_{\ast}}{\|\vec{z}\|_{2}} \,.
  \end{equation*}
  Then for $t > 0$ we have
  \begin{equation*}
    \P\bigl(\|\vec{Z}\|_{\ast} > \expect \|\vec{Z}\|_{\ast} + t \bigr)
        \leq \exp\left(-\frac{t^2}{2L_\ast^2}\right) \,.
  \end{equation*}
\end{lemma}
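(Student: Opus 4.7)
The plan is to reduce the claim to the standard Gaussian concentration inequality for Lipschitz functions. First I set $f\colon \R^m \to \R$, $f(\vec{z}) := \|\vec{z}\|_\ast$, and check its Lipschitz property with respect to the Euclidean metric. By the reverse triangle inequality applied to $\|\cdot\|_\ast$, together with the definition of $L_\ast$,
\begin{equation*}
  \bigl|f(\vec{z}) - f(\vec{z}')\bigr|
    \leq \|\vec{z} - \vec{z}'\|_\ast
    \leq L_\ast \, \|\vec{z} - \vec{z}'\|_2,
\end{equation*}
so $f$ is $L_\ast$-Lipschitz as a map between $(\R^m, \|\cdot\|_2)$ and $\R$.

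Next I invoke the classical Tsirelson--Ibragimov--Sudakov concentration inequality: for any $L$-Lipschitz function $F \colon \R^m \to \R$ (with respect to the Euclidean norm) and a standard Gaussian vector $\vec{Z}$,
\begin{equation*}
  \P\bigl(F(\vec{Z}) > \expect F(\vec{Z}) + t\bigr)
    \leq \exp\!\left(-\frac{t^2}{2 L^2}\right)
    \quad \text{for all } t > 0.
\end{equation*}
This is exactly the content of the two references cited immediately before the lemma statement (Pisier~\cite{Pis86}, pp.~180/181, and Adler--Taylor~\cite{adler2009random}, Lem.~2.1.6). Plugging in $F = f$ and $L = L_\ast$ yields the assertion.

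If I wanted to keep the proof self-contained, the cleanest route is the Herbst argument. After a mollification step (convolving with a narrow Gaussian and later passing to the limit), I may assume $f$ is smooth with $\|\nabla f(\vec{z})\|_2 \leq L_\ast$ a.e. Applying the Gaussian logarithmic Sobolev inequality to $\exp(\lambda f / 2)$ produces a differential inequality for $H(\lambda) := \log \expect \exp\!\bigl(\lambda (f(\vec{Z}) - \expect f(\vec{Z}))\bigr)$ that integrates to $H(\lambda) \leq \lambda^2 L_\ast^2 / 2$; a Chernoff bound with the optimal choice $\lambda = t/L_\ast^2$ then gives the stated tail. Alternatively, one can appeal to Borell's Gaussian isoperimetric inequality: the sublevel set $A := \{f \leq \mathrm{med}(f(\vec{Z}))\}$ has Gaussian measure at least $\tfrac{1}{2}$, and the Lipschitz property implies $A + s B_2^m \subseteq \{f \leq \mathrm{med}(f(\vec{Z})) + s L_\ast\}$, so isoperimetry yields concentration around the median; a short bound on $|\,\mathrm{med}(f(\vec{Z})) - \expect f(\vec{Z})\,| \preceq L_\ast$ then converts this to concentration around the mean. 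The main (and only) obstacle is the underlying Gaussian analysis -- either the log-Sobolev inequality or Borell's theorem -- which I would not reprove but cite from the above references.
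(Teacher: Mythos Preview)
Your proposal is correct, and in fact goes further than the paper: the paper does not prove this lemma at all but merely states it, citing \cite[pp.~180/181]{Pis86} and \cite[Lem~2.1.6]{adler2009random} for the general Lipschitz concentration inequality. Your reduction via the reverse triangle inequality to show that $\vec{z} \mapsto \|\vec{z}\|_\ast$ is $L_\ast$-Lipschitz, followed by an appeal to the Tsirelson--Ibragimov--Sudakov inequality, is precisely the standard derivation and is exactly what the cited references provide.
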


The expected $\ell_2$-norm of a standard Gaussian vector $\vec{Z}$ in $\R^m$ is easily estimated from above:
\begin{equation*} 
  \expect \|\vec{Z}\|_{2}
    \leq \sqrt{\expect \|\vec{Z}\|_{2}^2}
    = \sqrt{\sum_{j=1}^m \expect Z_j^2}
    = \sqrt{m} \,.
\end{equation*}
With $L_2 = 1$, for $t = 2\sqrt{m}$ 
we obtain
\begin{equation} \label{eq:|Z|>3sqrt(m)}
  \P\left(\|\vec{Z}\|_{2} > 3\sqrt{m}\right)
    \leq 
        \P\left(\|\vec{Z}\|_{2} > \expect\|\vec{Z}\|_2 + 2\sqrt{m}\right)
    \leq e^{-2m} \,.
\end{equation}
For $\vec{z} \in \R^m$ we have $\|\vec{z}\|_1 \leq \sqrt{m} \cdot \|\vec{z}\|_2$.
Hence,
\begin{equation}\label{eq:|Z|>3m}
  \P\left(\|\vec{Z}\|_{1} > 3m\right)
    \leq \P\left(\sqrt{m} \cdot \|\vec{Z}\|_{2} > 3m\right)
    = \P\left(\|\vec{Z}\|_{2} > 3\sqrt{m}\right)
    \leq e^{-2m} \,.
\end{equation}

\section*{Acknowledgements}

We thank Stefan Heinrich, Aicke Hinrichs, and Thomas K\"uhn for valuable hints.
We also thank the anonymous referees for carefully reading the manuscript.
The initial version of the paper was written in summer 2023
during the first author’s temporary
employment at TU Chemnitz, Faculty of Mathematics.


\bibliographystyle{amsplain}

\bibliography{lit}

\end{document}